\newtheorem{prop}{Proposition}
\newtheorem{assumption}{Assumption}
\newtheorem{lem}{Lemma}[section]
\newtheorem{rem}{Remark}[section]
\newtheorem{algo}{Algorithm}[section]
\newcommand{\esp}[1]{\mathbb{E}\left[ #1 \right]}
\newcommand{\prob}[1]{\mathbb{P}\left( #1 \right)}
\newcommand{\R}{\mathbb{R}}
\newcommand{\abs}[1]{\left|#1\right|}
\newcommand{\norm}[1]{\left|\left| #1 \right|\right|}
\newcommand{\norms}[1]{|| #1 ||}
\newcommand{\jac}{\operatorname{Jac}}
\newcommand{\hess}{\operatorname{Hess}}
\renewcommand{\epsilon}{\varepsilon}
\begin{document}
\title{Generalized and hybrid Metropolis-Hastings overdamped Langevin algorithms}
\author[Romain PONCET]{Romain Poncet$^{\scriptstyle 1}$}

\keywords{Non-reversible diffusions, Langevin samplers, Markov Chain Monte Carlo,
Metropolis-Hastings, variance reduction, lifting method, MCMC}

\maketitle

\begin{center}
$^1$ CMAP, Ecole Polytechnique, CNRS, Université Paris-Saclay, 91128 Palaiseau, France;\\
\email{romain.poncet@cmap.polytechnique.fr}
\end{center}

\vskip 0.1 in
\noindent
{\small
{\bf Abstract}.
It has been shown that the nonreversible overdamped Langevin dynamics enjoy better
convergence properties in terms of spectral gap and asymptotic variance than the
reversible one
(\cite{hwang1993,hwang2005,LelievreNierPavliotis13,Wu2014,ReyBellet15,ReyBellet15_2,
Duncan2016}).
In this article we propose a variance reduction method for the Metropolis-Hastings
Adjusted Langevin Algorithm (MALA) that makes use of the good behaviour of the
these nonreversible dynamics.
It consists in constructing a nonreversible Markov chain (with respect to the target
invariant measure) by using a Generalized Metropolis-Hastings adjustment on a
lifted state space. We present two variations of this method and we discuss
the importance of a well-chosen proposal distribution in terms of average rejection
probability.
We conclude with numerical experimentations to compare our algorithms with
the MALA, and show variance reduction of several order of magnitude in some
favourable toy cases.

\vskip 0.1 in


\section{Introduction}

This article proposes a new class of MCMC algorithms whose objective is to compute
expectations
\begin{align}
  \label{eq:integral}
  \pi(f):=\mathbb{E}_\pi(f)=\displaystyle\int_{\R^d} f(x)\pi(dx),
\end{align}
for a given observable $f$, with respect to a probability measure $\pi(dx)$
absolutely continuous, with respect to the Lebesgue measure,
with density $\pi(x)=e^{-U(x)}$.
We suppose, as it is the case in many practical situations, that $\pi$ is only
known up to a multiplicative constant.
\vspace{2mm}

Many techniques have been developed to solve this problem.
Deterministic quadratures can be very efficient at low dimension.
Yet, in the high dimensional case, these methods tend to become inefficient, and MCMC methods
can be used instead.
The basic idea is to construct an ergodic Markov chain with respect to $\pi$,
and to approximate $\pi(f)$ by the time average of this Markov chain.
There are infinitely many ways to construct such a discrete time process.
The general idea is to use an approximate time discretization of a
time-continuous process known to be ergodic with respect to $\pi$.
Generally, we cannot expect this discrete time process to be ergodic
with respect to $\pi$.
Thus one can use a Metropolis-Hastings acceptance-rejection step
that ensures the detailed balance, and thus makes the chain reversible
and ergodic with respect to $\pi$.
In the case of the Euler-Maruyama discretization of the overdamped Langevin dynamics,
\begin{align}
  \label{eq:ovLangevin_rev}
  dX_t=\nabla\log\pi(X)+\sqrt{2}dW_t,
\end{align}
with $(W_t)_{t\geq 0}$ a standard Brownian motion in $\R^d$,
the method is called Metropolis Adjusted Langevin Algorithm
(MALA, \cite{roberts1996}).

Yet, it has been noticed in several contexts that departing from the reversibility
can improve the performances of MCMC methods.
This article aims to propose a generalization of the standard MALA
that can be able construct nonreversible Markov chains that can outperform
classical MALA.

\subsection{Nonreversible dynamics}

On the continous time setting, analysis have been carried out to compare
the convergence properties of some time-continuous dynamics that are ergodic with
respect to $\pi$
\cite{hwang1993,hwang2005,LelievreNierPavliotis13,Wu2014,ReyBellet15,ReyBellet15_2,
Duncan2016},
based on two kinds of optimality criterion:
the speed of convergence toward equilibrium, measured in terms of
spectral gap in $L^2(\pi)$, and asymptotic variance for the time averages.
Obviously from a computational point of view
an increase of the spectral gap enables to reduce the burn-in,
and a reduction of the asymptotic variance leads to a decrease of
the computational complexity of the corresponding MCMC method.
%
These analysis compare, for different vector fields $\gamma$,
the overdamped Langevin dynamics given by,
\begin{align}
  \label{eq:ovLangevin_nrev}
  dX_t=\nabla\log\pi(X_t)dt+\gamma(X_t)dt+\sqrt{2}dW_t.
\end{align}
Under the condition of non explosion and that the vector field $\gamma$ is
taken such that $\nabla\cdot(\gamma \pi)=0$, this dynamic is ergodic with respect
to $\pi$.
Such vector fields can be constructed easily: for any skew-symmetric matrix $J$,
the vector field $\gamma$ defined by $\gamma(x)= J\nabla \log \pi(x)$ satisfies
this divergence-free equation.
Moreover, under this hypothesis, the following equation,
\begin{align}
  \label{eq:ovLangevin_hamil}
  dX_t=\gamma(X_t)dt,
\end{align}
conserves the energy $U$, which justifies the Hamiltonian denomination
of the term $\gamma$.
Moreover, this dynamic is time reversible if and only if $\gamma= 0$,
and in this case, the detailed balance is satisfied
(that is to say that the generator of the diffusion \eqref{eq:ovLangevin_nrev}
is self-adjoint in $L^2(\pi)$).
It is well known that among all vector fields $\gamma$ satisfying
the non explosion condition and such that $\nabla\cdot(\gamma \pi)=0$,
the dynamics given by \eqref{eq:ovLangevin_rev} in
the reversible case ($\gamma=0$)
has the worse rate of convergence in terms of spectral gap in $L^2(\pi)$
\cite{hwang1993,hwang2005,LelievreNierPavliotis13}.
Recent work has been done to construct divergence free (with respect to $\pi$)
perturbations of the drift
that achieve optimal convergence properties in the Gaussian case
\cite{LelievreNierPavliotis13,Wu2014}.
Recent works also show that breaking the non reversibility with such divergence free
perturbations on the drift also leads to improvement on the asymptotic variance.
It is shown in \cite{ReyBellet15} that the asymptotic variance decreases under
the addition of the irreversible drift.
Moreover, is has been shown in \cite{ReyBellet15_2} that the asymptotic
variance is monotonically decreasing with respect to the growth of the drift,
and the limiting behavior for infinitely strong drifts is characterized.
More recently, in \cite{Duncan2016} the authors investigate the dependence
of the asymptotic variance on the strength of the nonreversible perturbation.
\vspace{2mm}

On the discrete time setting, classical methods that depart from reversible sampling
consist in hybrid (Hamiltonian) MCMC \cite{Duane87,LelievreRoussetStoltz10}
and generalized hybrid MCMC methods \cite{Kennedy2001456}.
In the former method, the drift direction is chosen isotropically at each time step
and long time Hamiltonian integration is then carried out in this direction.
The latter can be seen as a generalization of the former that brings some inertia
in the direction of the Hamiltonian dynamics.
Another class of nonreversible samplers is composed by lifting methods.
They are designed in the discrete state space case, to construct a Markov chain
that satisfies some skew detailed balance
\cite{diaconis2000,Chen:1999:LMC:301250.301315,Hukushima12, Vucelja}.
They consist in increasing the state space to take into account
a privileged drift direction that is explored more efficiently.
More recently, Bierkens proposed an extension of the classical Metropolis-Hastings
algorithm to generate unbiased nonreversible Markov chain \cite{Bierkens2015}.
This is achieved by modifying the acceptance probability to depart from detailed
balance.
Eventually, a recent and quite different approach has been proposed in
\cite{bierkens2016zig} in the big data settings to circumvent the poor scalability
of standard MCMC methods.
The authors construct a continuous time piecewise deterministic Markov process.
It is a constant velocity model where the velocity direction switches at random
times with a rate depending on the target distribution.

\subsection{Outline}

We propose in this article a bias-free algorithm similar to MALA
that aims to exploit the asymptotic variance reduction of the nonreversible
time-continuous process.
The idea is to construct a Markov chain with invariant measure $\pi$,
by discretizing an equation of the form \eqref{eq:ovLangevin_nrev},
instead of equation \eqref{eq:ovLangevin_rev}, enhanced with an acceptance-rejection
step.
The main difficulty consists in unbiasing the unadjusted chain.
Indeed, it is not worth considering the use of a standard Metropolis-Hastings
acceptance probability since it is designed to
impose detailed balance with respect to the target distribution $\pi$,
and thus to define a reversible Markov chain with respect to $\pi$.
It would lead to a poor average acceptance ratio.
An elegant way would be to construct an adequate acceptance probability
with respect to this proposal, to ensure a high average acceptance ratio.
In the setting of \cite{Bierkens2015}, it would consist in finding a good
vorticity kernel.
Yet, we are not able to exactly do this.
Instead, we propose a class of lifted algorithms that rely on these unadjusted chains.
More precisely the first algorithm is a generalized Metropolis-Hastings algorithm
in an enhanced state space, and the second one can be seen as the analogous of
the generalized hybrid Monte Carlo method for the overdamped Langevin equation.
\vspace{2mm}

In section \ref{sec:GMALA} we present the first algorithm (generalized MALA).
We discuss in \ref{sec:GMALA_Q} how its performances are closely related to
the choice of the transition kernel of the unadjusted chain.
In section \ref{sec:GMALA_conv} we prove geometric convergence of the Markov
chain constructed with this algorithm under some hypotheses,
that ensure the existence of a central limit theorem.
Then, in section \ref{sec:GHMALA} we propose a modification of this algorithm
(the generalized hybrid MALA).
In section \ref{sec:num} we present numerical comparisons of these algorithms
with respect to classical MALA, which is followed by concluding remarks.
\vspace{2mm}

\section{Generalized MALA}
\label{sec:GMALA}

In this section, we construct a nonreversible Markov chain, ergodic with respect
to a target distribution $\pi$ known up to a normalizing constant.
The algorithm is similar to MALA in the sense that it constructs a Markov chain
from the discretization of an overdamped Langevin dynamic, augmented with an
acceptance-rejection step that makes it ergodic with respect to $\pi$.
The difference is that we construct a Markov chain on the discretization of
a nonreversible Langevin equation
to try to benefit from the smaller asymptotic variance of this kind of Markov
processes, than the reversible ones.
The main issue is then to choose a right acceptance probability that preserves the
good ergodic properties of the underlying Markov process.
\vspace{2mm}

To state our algorithm, we slightly modify Equation \eqref{eq:ovLangevin_nrev}.
For $\xi\in\R$, we consider the diffusions,
\begin{align}
  \label{eq:ovLangevin_nrev_xi}
  dX_t=\nabla\log\pi(X_t)dt+\xi\gamma(X_t)dt+\sqrt{2}dW_t,
\end{align}
with divergence-free condition $\nabla\cdot(\gamma \pi)=0$.
This way, $\xi$ specifies the direction and the intensity of the nonreversibility.
We denote now by $Q^{\xi}$ a proposal kernel that
correspond to some discretization of the diffusions \eqref{eq:ovLangevin_nrev_xi}
with parameter $\xi$.
We propose the following algorithm that we call Generalized MALA (GMALA),
\begin{algo}[Generalized MALA]
  \label{algo:GMALA}
  Let $ h>0$, $(x_0,\xi_0)\in\R^d\times\R$ be an initial point and an initial direction.
  Iterate on $n\geq 0$.
  \begin{enumerate}
    \item Sample $y^{n+1}$ according to $Q^{\xi^n}(x^n,dy)$.
    \item Accept the move with probability
    \begin{align}
      \label{eq:GMALA_ratio}
      A^{\xi^n}(x^n,y^{n+1})
      =1\wedge \dfrac{\pi(y^{n+1})Q^{-\xi^n}(y^{n+1},x^n)}{\pi(x^n)Q^{\xi^n}(x^n,y^{n+1})}.
    \end{align}
    and set $(x^{n+1},\xi^{n+1})=(y^{n+1},\xi^n)$;
    otherwise set $(x^{n+1},\xi^{n+1})=(x^n,-\xi^n)$.
  \end{enumerate}
\end{algo}
The important part of this algorithm is that the direction $\xi^n$ of the
Hamiltoninan exploration must be inverted at each rejection to ensure its
unbiasedness.
A good choice of $Q^{\xi}$ is given in the next section.
\vspace{2mm}

Unbiasedness is obvious since this algorithm is actually built as a Generalized
Metropolis-Hastings algorithm on the increased state space $E=\R^d\times \{-\xi_0,\xi_0\}$.
To simplify notations, we denote by $x_\xi$ all element $(x,\xi)\in E$.
We set $S$ the involutive transformation defined for all element
$x_\xi \in E$ by $S(x_\xi)=x_{-\xi}$.
We extend the definition of $\pi$ on $E$ by
$\pi(x_\xi)=\frac{\pi(x)}{2}$ for $x_\xi\in E$.
Obviously $\pi$ is unchanged by $S$.
Then, the algorithm constructs a Markov chain with transition kernel density $P$
given by,
\begin{align*}
  P(x_{\xi},y_{\eta})=Q^{\xi}(x,y)A^{\xi}(x,y)\mathds{1}_\xi(\eta)+\delta_{S(x_\xi)}(y_\eta)
  \left(1-\int_{\R^d}Q^{\xi}(x,z)A^{\xi}(x,z)dz \right),
\end{align*}
where $\mathds{1}_{xi}$ denotes the characteristic function and $\delta_{S(x_\xi)}$
a Dirac delta function,
that satisfies the following skew detailed balance,
\begin{align}
  \forall x_{\xi},y_{\eta}\in E,\quad
  \pi(x_{\xi})P(x_{\xi},y_{\eta})=\pi(y_{\eta})P(S(y_{\eta}),S(x_{\xi})).
\end{align}
\vspace{2mm}

Heuristically, we hope that the discretization of the time-continuous process
\eqref{eq:ovLangevin_nrev} specified by $Q^\xi$ benefits from the same good
behavior in terms of asymptotic variance reduction.
Since the Markov chain is constructed as parts of the discretization of the time-continuous
dynamics between the rejections, then the closer to one is the average
acceptance probability, the longer are these parts, and the more we can hope to
benefit from this good behavior.
Thus, to give a hint about the relevance of this algorithm, we compute in section
\ref{sec:GMALA_Q} these average acceptance probabilities, and we show
that they are of the same order of those for MALA for some well-chosen discretization.
Moreover, we numerically show in section \ref{sec:num} that it can outperform
MALA by several order of magnitude in terms of asymptotic variance.
\vspace{2mm}

Yet, before showing these results, we present a heuristic justification
about this algorithm.
In the reversible case, the time-continous equation satisfies the detailed balance
with respect to $\pi$,
\begin{align*}
  \forall x,y\in\R^d,\forall  h>0,\quad
  \pi(x)P_{ h}(x,y)=\pi(y)P_{ h}(y,x),
\end{align*}
where $P_{ h}(x,y)$ denotes the density of the transition kernel to
go from $x$ to $y$ after a time $ h$ for the dynamics \eqref{eq:ovLangevin_rev}.
Moreover, MALA imposes that the Markov chain also satisfies the detailed balance
with respect to $\pi$.
In the nonreversible case, the time continuous process \eqref{eq:ovLangevin_nrev_xi}
satisfies a skew detailed balance as stated by the following lemma,
\begin{lem}
  \label{lem:skewed-db-continuous}
  For all $x$, $y\in\R^d$, for all $\xi\in\R$ and for all $ h>0$,
  the following relation holds,
  \begin{align}
    \label{eq:skewed-db-continuous}
    \pi(x)P_{ h}^\xi(x,y)=\pi(y)P_{ h}^{-\xi}(y,x),
  \end{align}
  where $P_{ h}^\xi(x,dy)$ is the transition probability measure of
  the $ h$-skeleton of the process $(X^{\xi}_t)_{t\geq 0}$, solution of
  Equation \eqref{eq:ovLangevin_nrev_xi}.
\end{lem}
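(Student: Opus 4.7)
The plan is to prove the skew detailed balance by deducing it from the operator-level identity $(L^\xi)^* = L^{-\xi}$ in $L^2(\pi)$, where $L^\xi$ denotes the infinitesimal generator of the diffusion \eqref{eq:ovLangevin_nrev_xi}. First I would reformulate the claim in weak form: for all test functions $f, g \in C_c^\infty(\R^d)$, \eqref{eq:skewed-db-continuous} is equivalent to the identity
\[
\int_{\R^d} f(x)\,(P_h^\xi g)(x)\,\pi(x)\,dx = \int_{\R^d} (P_h^{-\xi} f)(y)\,g(y)\,\pi(y)\,dy,
\]
since both sides can be rewritten as double integrals against the respective transition densities, and a relabelling $x \leftrightarrow y$ on one side converts it into the other.

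Next, at the level of generators, I would split $L^\xi f = L^0 f + \xi\,\gamma\cdot\nabla f$, where $L^0 f = \Delta f + \nabla\log\pi\cdot\nabla f = \pi^{-1}\nabla\cdot(\pi\nabla f)$ is the generator of the reversible overdamped Langevin dynamics \eqref{eq:ovLangevin_rev}, classically symmetric in $L^2(\pi)$. The key observation is that the perturbation $\mathcal{A}^\xi f := \xi\,\gamma\cdot\nabla f$ is \emph{antisymmetric} in $L^2(\pi)$: for $f, g \in C_c^\infty(\R^d)$, integration by parts combined with the hypothesis $\nabla\cdot(\gamma\pi) = 0$ gives
\[
\int_{\R^d} (\xi\gamma\cdot\nabla f)\,g\,\pi\,dx = -\int_{\R^d} f\,\nabla\cdot(\xi\gamma\, g\,\pi)\,dx = -\int_{\R^d} f\,(\xi\gamma\cdot\nabla g)\,\pi\,dx,
\]
where the last equality uses $\nabla\cdot(\xi\gamma g\pi) = \xi g\,\nabla\cdot(\gamma\pi) + \xi\gamma\cdot\nabla g\,\pi = \xi\gamma\cdot\nabla g\,\pi$. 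Hence $(\mathcal{A}^\xi)^* = \mathcal{A}^{-\xi}$, and combined with the self-adjointness of $L^0$ this yields $(L^\xi)^* = L^{-\xi}$ in $L^2(\pi)$.

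To pass from the generator identity to the semigroup identity $(P_h^\xi)^* = P_h^{-\xi}$, I would invoke the standard correspondence between mutually adjoint generators and mutually adjoint strongly continuous contraction semigroups, and then translate the latter back into transition densities via the weak formulation above to conclude \eqref{eq:skewed-db-continuous}.

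The main obstacle is technical rather than conceptual: one needs enough regularity of $U$ and $\gamma$ (non-explosion, smoothness, and appropriate growth) to ensure that $C_c^\infty(\R^d)$ is a core for $L^\xi$ in $L^2(\pi)$, that the transition kernel $P_h^\xi$ admits a smooth density (automatic by uniform ellipticity), and that the boundary terms in the integration by parts vanish. Under the implicit regularity assumptions of the paper these points are standard, but they must be checked to make the pointwise statement of the lemma rigorous.
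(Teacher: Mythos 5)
Your argument is essentially the paper's own proof: decompose the generator into a symmetric part plus the antisymmetric perturbation $\xi\gamma\cdot\nabla$ (whose antisymmetry in $L^2(\pi)$ follows from $\nabla\cdot(\gamma\pi)=0$), conclude $(\mathcal{L}^\xi)^*=\mathcal{L}^{-\xi}$, pass to adjoint semigroups, and read off the kernel identity from the weak formulation. The only difference is one of emphasis — you spell out the integration by parts while the paper spells out the domain argument for the adjointness of the generators (which you correctly flag as the remaining technical point) — so the proposal is correct and matches the paper's route.
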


The analogous of the reversible case would be to construct a $\pi$-invariant Markov
chain that satisfies the same kind of skew detailed balance.
Because the skew detailed balance \eqref{eq:skewed-db-continuous} can be seen
as a detailed balance on the enhanced state space $E$ up to the transformation $S$,
\begin{align}
  \label{eq:skewed-db-continuous_gen}
  \forall x,y\in\R^d,\forall \xi\in\R, \forall  h>0\quad
  \pi(x)P_{ h}(x_{\xi},y_{\xi})=\pi(y)P_{ h}(S(y_{\xi}),S(x_{\xi})),
\end{align}
where $P_{ h}(x_{\xi},y_{\xi})=P^\xi_{ h}(x,y)$,
the Generalized Metropolis-Hastings method given by algorithm \ref{algo:GMALA}
is actually the classical way to construct such a Markov chain.
Nevertheless, the main difference between the time-continous and the discrete time
dynamics is the fact that the latter requires some direction switching of the
nonreversible component of the dynamics.
As stated previously, the idea of lifting the state space to construct a
nonreversible chain has been used in the discrete state space setting.
Yet, the idea is quite different.
With classical lifting methods, the goal is to switch between several
directions with well-chosen probabilities, to quickly explore the state space
in all of these directions.
In our case, we do not aim to switch between several nonreversible directions.
Yet, we are forced to do so at each rejection, and we have no choice but to
reverse the current nonreversible directions.
That is to say that we control neither the probability of switching nor the direction.

\begin{proof}[Proof of Lemma \ref{lem:skewed-db-continuous}]
We denote by $\mathcal{L}^\xi$ the generator of diffusion \eqref{eq:ovLangevin_nrev_xi}.
One can show that,
\begin{align*}
  \mathcal{L}^\xi=\mathcal{S}+\xi\mathcal{A},
\end{align*}
where $\mathcal{S}=\nabla\log\pi(x)\nabla\cdot$, and $\mathcal{A}=\gamma\cdot\nabla$.
Then, one can show that $\mathcal{S}$ and $\mathcal{A}$ are respectively the
symmetric and the antisymmetric parts of $\mathcal{L}^1$, with respect to $L^2(\pi)$.
Moreover, one can show that for all $\xi\in\R$, $\mathcal{L}^\xi$ and
$\mathcal{L}^{-\xi}$ are adjoint from one another.
This amounts to show that $D((\mathcal{L}^{-\xi})^*)\subset D(\mathcal{L}^{\xi})$,
which follows from the injectivity of the operator $(\mathcal{L}^{-\xi})^*-iI$,
and the surjectivity of the operator $\mathcal{L}^\xi-iI$.
It follows that the semigroups $e^{\mathcal{L}^\xi t}$ and $e^{\mathcal{L}^{-\xi} t}$
are adjoint from one another (Corollary 10.6 \cite{Pazy83}).
Then, for all bounded functions $f$ and $g$,
\begin{align*}
  \displaystyle\int_{(\R^d)^2}f(x)g(y)P_{ h}^\xi(x,dy)\pi(dx)
  &=\displaystyle\int_{\R^d}f(x)e^{\mathcal{L}^\xi  h}g(x)\pi(dx),\\
  &=\displaystyle\int_{\R^d}e^{(\mathcal{L}^\xi)^*  h}f(x)g(x)\pi(dx),\\
  &=\displaystyle\int_{\R^d}e^{\mathcal{L}^{-\xi}  h}f(x)g(x)\pi(dx),\\
  &=\displaystyle\int_{(\R^d)^2}f(y)g(x)P_{ h}^{-\xi}(x,dy)\pi(dx).
\end{align*}
\end{proof}
\vspace{2mm}

\subsection{Choice of the proposition kernel}
\label{sec:GMALA_Q}

The method presented above can be tuned with the choice of the proposition kernel $Q^\xi$.
It should be chosen such that it approximates the law of the transition probability
of the $ h$-skeleton of the process $(X_t^\xi)_{t\geq 0}$ solution of equation
\eqref{eq:ovLangevin_nrev_xi}.
The basic idea is to define $Q^\xi$ as the density of the law of an approximate
discretization of equation \eqref{eq:ovLangevin_nrev_xi}.
Doing so, we can hope that the skew-detailed balance for the unadjusted chain would
be almost satisfied (since it is satisfied for the time-continous dynamics),
that is to say we can hope to benefit from an acceptance ratio close to one.
Moreover, we recall that the choice of the proposal kernel is the only degree of
freedom that enables to tune the rate of the direction switching.
Formally, the closer the proposal kernel is from the transition kernel of the
$ h$-skeleton of the continuous process, the higher the acceptance ratio is
(and thus the less frequent is the direction switching).
\vspace{2mm}

The basic idea would be to propose according to the Maruyama-Euler approximation
of equation \eqref{eq:ovLangevin_nrev_xi}, as it is done for MALA.
We denote by $Q_1^{\xi}$ this kernel.
That is to say $Q_1^{\xi}(x,dy)$ is given by the law of $y$, solution of
\begin{align}
  \label{eq:Q1_eq}
  y=x- h\nabla U(x)- h\xi\gamma(x) + \sqrt{2 h} \chi,
\end{align}
where $\chi$ a standard normal deviate.
Then, $Q_1^\xi$ is given by
\begin{align}
  \label{eq:Q1_density}
  Q_1^\xi(x,dy)=
  \dfrac{1}{(4\pi h)^{d/2}}
  \exp\left(\frac{1}{4 h}\norms{y-(x- h\nabla U(x)- h\xi\gamma(x))}^2\right)dy.
\end{align}
Sadly, even though the simplicity of this proposal is appealing, this proposition
kernel leads to an average rejection rate of order $h$ when $\xi\neq 0$ and $\gamma$ is
non-linear as stated in Proposition \ref{prop:reject_ratio_fp}.
It is significantly worse than MALA that enjoys an average rejection rate of order
$ h^{3/2}$ (see \cite{MR2583309}).
As shown later by the numerical simulations, this bad rejection rate is not
a pure theoretical problem: it forbids to use large discretization steps $ h$,
and thus the method only generates highly correlated samples.
\vspace{2mm}

To overcome this problem, we propose to implicit the resolution of the nonreversible
term in equation \eqref{eq:ovLangevin_nrev_xi} with a centered point discretization.
More precisely, we propose a move $y_\xi$ from $x_\xi$, such that
$y$ would be distributed according to the solution of
\begin{align}
  \label{eq:Q2_eq}
  \Phi_x^{h\xi}(y)=x- h\nabla U(x) + \sqrt{2 h} \chi,
\end{align}
where $\chi$ a standard normal deviate, and $\Phi_x^{h\xi}$ is the function defined by,
\begin{align}
  \label{eq:phi_x^hxi}
  \Phi_x^{ h\xi}:\quad \R^d  \to  \R^d,\quad
  y\mapsto y+ h\xi\gamma\left(\dfrac{x+y}{2}\right).
\end{align}
Existence of such a proposal kernel, denoted by $Q_2^\xi$, is ensured
under the hypothesis that $U$ is twice differentiable with
uniformly bounded second derivative.
\begin{assumption}
  \label{as:U_Lipschitz}
    We suppose that $U$ is twice differentiable with uniformly bounded second
    derivative.
\end{assumption}

\begin{prop}
  \label{prop:existence_y}
  Under Assumption \ref{as:U_Lipschitz}, there exists $h_0>0$, such that
  for all $h<h_0$, for all $x_\xi\in E$, for all $\chi\in\R^d$,
  there exists a unique $y\in\R^d$, solution of equation \eqref{eq:Q2_eq},
  and if $\chi$ denotes a standard normal deviate, the function $Y_\xi$ defined
  almost surely as the solution of equation \eqref{eq:Q2_eq} is a well-defined
  random variable, with law $Q_2^\xi(x,dy)$, given by,
  \begin{align}
    \label{eq:Q2_density}
    Q_2^\xi(x,dy)=
    \dfrac{1}{(4\pi h)^{d/2}}
    |\jac{\phi^{\xi h}_x(y)}|
    \exp\left(\frac{1}{4 h}\norms{\phi^{ h\xi}_x(y)-(x- h\nabla\log\pi(x))}^2\right)dy.
  \end{align}
  Moreover
  \begin{align*}
    x-y=\sqrt{2h} \chi +O\left(h\norm{\nabla U(x)} + h^{3/2}\norm{\chi} \right),
  \end{align*}
  and there exists $C>0$, independent of $x$, $\chi$, and $h$, such that,
  \begin{align*}
    \norm{\nabla U(y)}\leq (1+Ch)\norm{\nabla U(x)} + C\sqrt{2h} \norm{\chi}.
  \end{align*}
\end{prop}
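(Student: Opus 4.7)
The plan is to recast Equation \eqref{eq:Q2_eq} as a fixed-point problem and solve it via Banach's theorem, then derive the density by a change of variables, and finally obtain the two asymptotic bounds by a one-step bootstrap.

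First, for existence and uniqueness, I would fix $x\in\R^d$, $\chi\in\R^d$ and $\xi\in\{-\xi_0,\xi_0\}$, and consider the map
\begin{equation*}
T_{x,\chi,\xi}(y) := x - h\nabla U(x) + \sqrt{2h}\,\chi - h\xi\,\gamma\!\left(\frac{x+y}{2}\right).
\end{equation*}
Under Assumption \ref{as:U_Lipschitz}, $\nabla U$ is globally Lipschitz with some constant $L$, and hence (in the canonical setup $\gamma(\cdot)=J\nabla\log\pi(\cdot)=-J\nabla U(\cdot)$) so is $\gamma$, with constant $L_\gamma := \|J\|\,L$. Therefore
\begin{equation*}
\|T_{x,\chi,\xi}(y_1) - T_{x,\chi,\xi}(y_2)\| \leq \tfrac{h|\xi|L_\gamma}{2}\,\|y_1 - y_2\|,
\end{equation*}
so $T_{x,\chi,\xi}$ is a strict contraction as soon as $h|\xi_0|L_\gamma < 2$. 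Setting $h_0 := 2/(|\xi_0|L_\gamma)$, Banach's fixed-point theorem provides a unique solution $y =: Y_\xi(\chi)$ of \eqref{eq:Q2_eq}, and the continuity of the fixed point in the parameter $\chi$ yields measurability of $Y_\xi$.

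For the density formula, the same Lipschitz bound shows that $\Phi_x^{h\xi}$ is a $C^1$-diffeomorphism on $\R^d$: its Jacobian $I+\tfrac{h\xi}{2}D\gamma\!\bigl((x+y)/2\bigr)$ has operator norm in $[1-\tfrac{h|\xi|L_\gamma}{2},\,1+\tfrac{h|\xi|L_\gamma}{2}]$, a strict neighborhood of $I$. Since $z := x - h\nabla U(x) + \sqrt{2h}\,\chi$ has Gaussian density $(4\pi h)^{-d/2}\exp\!\bigl(-\tfrac{1}{4h}\|z-(x-h\nabla U(x))\|^2\bigr)$, the classical change-of-variable formula applied to $z = \Phi_x^{h\xi}(y)$ gives exactly formula \eqref{eq:Q2_density}.

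For the asymptotic estimate, I would start from
\begin{equation*}
y - x = -h\nabla U(x) + \sqrt{2h}\,\chi - h\xi\,\gamma\!\left(\tfrac{x+y}{2}\right),
\end{equation*}
and use the Lipschitz/linear-growth bound $\|\gamma(z)\|\leq \|J\|\|\nabla U(z)\|$ to write $\|\gamma((x+y)/2)\| \leq \|\gamma(x)\| + \tfrac{L_\gamma}{2}\|y-x\|$. Combined with the crude bound $\|y-x\| \leq h\|\nabla U(x)\| + \sqrt{2h}\|\chi\| + h|\xi|\,\|\gamma((x+y)/2)\|$, solving for $\|\gamma((x+y)/2)\|$ (valid for $h<h_0$) yields $\|\gamma((x+y)/2)\| = O\!\bigl(\|\nabla U(x)\| + \sqrt{h}\|\chi\|\bigr)$. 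Re-injecting into the displayed identity produces the claim $x-y = \sqrt{2h}\,\chi + O\!\bigl(h\|\nabla U(x)\| + h^{3/2}\|\chi\|\bigr)$ (up to the sign convention on the symmetric Gaussian $\chi$), and the bound $\|\nabla U(y)\| \leq (1+Ch)\|\nabla U(x)\| + C\sqrt{2h}\|\chi\|$ follows by applying the Lipschitz bound on $\nabla U$ to $\|\nabla U(y)-\nabla U(x)\|\leq L\|y-x\|$ and using the asymptotic just obtained.

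The only genuine subtlety is deriving the Lipschitz property of $\gamma$ from Assumption \ref{as:U_Lipschitz}: it is free in the canonical case $\gamma=-J\nabla U$, but must otherwise be added as an extra hypothesis. Once this is granted, every step is a standard application of the contraction-mapping theorem and a one-step bootstrap, with no significant analytical difficulty.
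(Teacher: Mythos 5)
Your proposal is correct and follows essentially the same route as the paper: the paper's own (very terse) proof reduces existence and uniqueness to Lemma \ref{lem:diffeo}, which is proved by exactly the Picard/Banach contraction argument you spell out, and disposes of the two estimates by invoking the Lipschitz property of $\nabla U$, i.e.\ your bootstrap. Your remark that Lipschitzness of $\gamma$ must be inherited from Assumption \ref{as:U_Lipschitz} via the canonical choice $\gamma = J\nabla\log\pi$ is accurate and matches what the paper implicitly assumes in the proof of Lemma \ref{lem:diffeo}.
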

\begin{proof}[Proof of Proposition \ref{prop:existence_y}]
  The first point is a corollary of Lemma \ref{lem:diffeo} below.
  The second point can be proven by making use of the fact that $\nabla U$ is Lipschitz.
\end{proof}
The following lemma is used to prove Proposition \ref{prop:existence_y}.
We can note that the $h_0$ given by this lemma depends on the
Lipschitz coefficient of $\nabla U$.
\begin{lem}
  \label{lem:diffeo}
  Under Assumption \ref{as:U_Lipschitz}, there exists $h_0>0$, such that
  for all $h<h_0$, for all $x_\xi\in E$,
  the function $\Phi_x^{h\xi}:\R^d\to\R^d$, defined by \eqref{eq:phi_x^hxi}
  is a $C^1$-diffeomorphism.
\end{lem}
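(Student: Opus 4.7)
The plan is to use the structure $\Phi_x^{h\xi}(y) = y + h\xi\,\gamma\bigl(\tfrac{x+y}{2}\bigr)$, view the map as a small perturbation of the identity, and verify the three ingredients needed for a global $C^1$-diffeomorphism: (i) $C^1$ regularity, (ii) invertibility of the Jacobian at every point, and (iii) global bijectivity. Since on the lifted space $E$ the parameter $\xi$ takes only the values $\pm\xi_0$, the bound $|\xi|\leq|\xi_0|$ is uniform, and the constant $h_0$ will be chosen depending only on $|\xi_0|$ and the Lipschitz constant of $\gamma$.

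First I would record that, by Assumption~\ref{as:U_Lipschitz} and the explicit form $\gamma(x)=J\nabla\log\pi(x)=-J\nabla U(x)$, the vector field $\gamma$ is $C^1$ with globally bounded (and hence Lipschitz) differential: there exists $L>0$ such that $\|D\gamma(z)\|\leq L$ for every $z\in\R^d$. This immediately gives $\Phi_x^{h\xi}\in C^1(\R^d,\R^d)$ with differential
\begin{align*}
  D\Phi_x^{h\xi}(y) = I + \tfrac{h\xi}{2}\, D\gamma\!\left(\tfrac{x+y}{2}\right).
\end{align*}
Choosing $h_0 := 1/(|\xi_0|L)$ (say) ensures that for all $h<h_0$ and all $\xi\in\{-\xi_0,\xi_0\}$ the operator $\tfrac{h\xi}{2}D\gamma(\cdot)$ has norm strictly less than $1/2$, so $D\Phi_x^{h\xi}(y)$ is invertible everywhere by a Neumann series argument.

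Next I would establish global bijectivity via a Banach fixed-point argument. For any target $z\in\R^d$, define
\begin{align*}
  T_z(y) := z - h\xi\, \gamma\!\left(\tfrac{x+y}{2}\right).
\end{align*}
Solving $\Phi_x^{h\xi}(y)=z$ is equivalent to finding a fixed point of $T_z$. Using $\|\gamma(a)-\gamma(b)\|\leq L\|a-b\|$, we get
\begin{align*}
  \|T_z(y_1)-T_z(y_2)\| \leq \tfrac{h|\xi|L}{2}\,\|y_1-y_2\|,
\end{align*}
which is a strict contraction on the complete space $\R^d$ for $h<h_0$. The Banach fixed-point theorem then provides existence and uniqueness of $y$, proving that $\Phi_x^{h\xi}$ is a bijection of $\R^d$ onto itself. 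The same contraction estimate, applied with $z$ fixed and two different starting points, also gives injectivity directly.

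Finally, combining global bijectivity with the fact that $D\Phi_x^{h\xi}(y)$ is invertible at every $y$, the inverse function theorem shows that the inverse map is $C^1$ in a neighbourhood of every point, hence globally $C^1$. Thus $\Phi_x^{h\xi}$ is a $C^1$-diffeomorphism of $\R^d$. The only subtle point, and the one deserving care, is that the argument must be uniform in $(x,\xi)\in E$: this is granted because $|\xi|$ ranges only in the finite set $\{0,|\xi_0|\}$ and the Lipschitz constant of $\gamma$ is global, so the single threshold $h_0$ works for every initial state.
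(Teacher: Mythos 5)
Your proof is correct and follows essentially the same route as the paper's: a Banach/Picard fixed-point argument with the map $y\mapsto z-h\xi\,\gamma\bigl(\tfrac{x+y}{2}\bigr)$ for global bijectivity, plus invertibility of the Jacobian $I+\tfrac{h\xi}{2}D\gamma$ and the inverse function theorem for the $C^1$ regularity of the inverse. You spell out the uniformity in $(x,\xi)$ and the Neumann-series bound more explicitly than the paper does, which is a welcome addition rather than a deviation.
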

\begin{proof}[Proof of Lemma \ref{lem:diffeo}]
  The proof that $\Phi_x^{h\xi}$ is bijective relies on Picard fixed point theorem.
  For any fixed $z\in\R^d$, we define $\Psi$ on $\R^d$ by
  $\Psi(y)=z-h\xi J \nabla U \left( \dfrac{x+y}{2} \right)$.
  Then, since $\nabla U$ is supposed to be Lipschitz, then for small enough $h$
  this application is a contraction mapping from $\R^d$ to $\R^d$.
  The fact that $\Phi_x^{h\xi}$ is everywhere differentiable is clear, and the fact that
  its inverse is everywhere differentiable as well comes from the fact that
  the determinant of the Jacobian of $\Phi_x^{h\xi}$ is strictly positive
  for sufficiently small $h$.
\end{proof}
\vspace{2mm}

Sampling from the this proposal kernel can be done by a fixed point method
when no analytical solution is available since the proof of Lemma \ref{lem:diffeo}
uses a Picard fixed point argument.
This transition kernel leads to a rejection rate of order $ h^{3/2}$
(see Proposition \ref{prop:reject_ratio_fp}),
which is en improvement from $Q_1^\xi$.
The main advantage of this kernel is that the computation of
$|\jac{\phi^{\xi h}_x(y)}|$ can be avoided in the case where $\gamma$
is defined by $\gamma(x)=J\nabla \log \pi(x)$, with
$J$ a skew symmetric matrix.
Indeed, only the ratio $|\jac{\phi^{\xi h}_x(y)}|/|\jac{\phi^{-\xi h}_y(x)}|$
is required to compute the acceptance probability $A(x,y)$,
and this ratio is equal to $1$ if $\gamma$ is of gradient type, as stated by the following lemma.
\begin{lem}
  \label{lem:jac_pm}
  For all $x,y\in\R^d$, and for all $ h>0$,
  \begin{align*}
    |\jac{\phi^{\xi h}_x(y)}|=|\jac{\phi^{-\xi h}_y(x)}|
  \end{align*}
\end{lem}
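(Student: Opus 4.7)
The plan is to compute both Jacobian matrices directly from the definition of $\Phi^{h\xi}_x$. By the chain rule, differentiating $\Phi^{h\xi}_x(y) = y + h\xi\,\gamma\bigl((x+y)/2\bigr)$ with respect to $y$ yields $I + \tfrac{h\xi}{2}\nabla\gamma(m)$, while differentiating $\Phi^{-h\xi}_y(x) = x - h\xi\,\gamma\bigl((x+y)/2\bigr)$ with respect to $x$ yields $I - \tfrac{h\xi}{2}\nabla\gamma(m)$, where $m := (x+y)/2$ and $\nabla\gamma$ denotes the Jacobian matrix of $\gamma$. Setting $A := \tfrac{h\xi}{2}\nabla\gamma(m)$, the claim reduces to showing $|\det(I+A)| = |\det(I-A)|$.

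At this point one must specialize to the gradient-type case, $\gamma(z) = J\nabla\log\pi(z) = -J\nabla U(z)$ with $J$ skew-symmetric, which is the setting in which the lemma is advertised just above. Under this hypothesis, $\nabla\gamma(m) = -J\hess U(m)$, so we can write $A = J'\,S$ with $J' := -\tfrac{h\xi}{2}J$ still skew-symmetric and $S := \hess U(m)$ symmetric. In particular $A^\top = S^\top J'^\top = -S J'$.

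The final step combines two elementary identities. First, invariance of the determinant under transposition gives
\begin{align*}
\det(I - A) = \det\bigl((I-A)^\top\bigr) = \det(I - A^\top) = \det(I + S J').
\end{align*}
Then Sylvester's identity $\det(I+MN)=\det(I+NM)$, applied with $M=S$ and $N=J'$, gives $\det(I + SJ') = \det(I + J'S) = \det(I+A)$. Taking absolute values yields the desired equality.

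The only genuine subtlety is recognizing that the lemma implicitly assumes the gradient structure $\gamma = J\nabla\log\pi$ announced in the surrounding discussion: without it, $\nabla\gamma(m)$ could be an arbitrary matrix and the identity fails. With that structure in hand, the key algebraic fact being exploited is that the spectrum of a product $JS$ (skew times symmetric) is symmetric about zero, which forces $\det(I+\alpha JS) = \det(I-\alpha JS)$; everything else is routine.
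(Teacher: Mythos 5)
Your proof is correct and follows essentially the same route as the paper's: both reduce the claim to the identity $\det(I+J'S)=\det(I-J'S)$ for $J'$ skew-symmetric and $S$ symmetric, established by combining transposition invariance of the determinant with the identity $\det(I+MN)=\det(I+NM)$ (the paper phrases the latter as $\chi_{AS}=\chi_{SA}$ for characteristic polynomials). Your remark that the statement implicitly requires $\gamma=J\nabla\log\pi$ is accurate and worth making explicit, since the lemma as stated omits this hypothesis even though the surrounding text assumes it.
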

\begin{proof}[Proof of lemma \ref{lem:jac_pm}]
  This result uses the more general fact that for any skew-symmetric
  matrix $A$ and any symmetric matrix $S$, the matrices $Id+AS$ and
  $Id-AS$ have same determinant.
  This statement is equivalent to say that $\chi_{AS}=\chi_{-AS}$,
  where we denote by $\chi_{M}$ the characteristic polynomial of any square
  matrix $M$.
  This last statement is true since for any square matrices $A$ and $S$ (of the same size)
  $\chi_{AS}=\chi_{SA}$.
  Then using the transposition, $\chi_{AS}=\chi_{A^t S^t}$.
  Eventually using the fact that $A^t=-A$ and $S^t=S$, it comes
  $\chi_{AS}=\chi_{-A S}$.
  The result follows from the fact that the matrix $\jac{\phi^{\xi h}_x(y)}$
  is of the form $Id+\xi AS$.
\end{proof}

We denote by $\alpha_{ h,\xi}^1$ and $\alpha_{ h,\xi}^2$
respectively the acceptance probability for proposal kernels $Q_1^\xi$
and $Q_2^\xi$ (defined respectively by \eqref{eq:Q1_density} and \eqref{eq:Q2_density}).
The following proposition provides an upper bound on the moments of the rejection
probability.
\begin{prop}
  \label{prop:reject_ratio_fp}
  Suppose that $U$ is three times differentiable with bounded second and third
  derivatives.
  Then for all $l\geq 1$, there exists $C(l)>0$ and $ h_0>0$ such that
  for all positive $ h< h_0$, and for all $x\in\R^d$,
  \begin{align*}
    \esp{(1-\alpha_{ h,\xi}^1(x,Y^1_{h,\xi}))^{2l}}&\leq C(l)(1+\norm{\nabla U(x)}^{4l})h^{2l}\\
    \esp{(1-\alpha_{ h,\xi}^2(x,Y^2_{h,\xi}))^{2l}}&\leq C(l)(1+\norm{\nabla U(x)}^{4l})h^{3l}
  \end{align*}
\end{prop}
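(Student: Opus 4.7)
The plan is to bound the $2l$-th moment of $1-\alpha^i_{h,\xi}$ by that of the log-Metropolis ratio, then Taylor-expand the latter in $h$ and take the expectation over the Gaussian noise.

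\emph{Reduction and explicit formulas.} The inequality $e^t\geq 1+t$ yields $1-\min(1,e^t)\leq |t|$, so
\begin{equation*}
(1-\alpha^i_{h,\xi}(x,y))^{2l}\leq |\log r^i_\xi(x,y)|^{2l},\qquad r^i_\xi(x,y):=\dfrac{\pi(y)Q_i^{-\xi}(y,x)}{\pi(x)Q_i^\xi(x,y)},\quad i=1,2,
\end{equation*}
and it suffices to bound $\esp{|\log r^i_\xi(x,Y^i_{h,\xi})|^{2l}}$. From \eqref{eq:Q1_density} and \eqref{eq:Q2_density}, $\log r^i_\xi$ is the sum of $U(x)-U(y)$, the difference of two Gaussian quadratic forms, and, for $i=2$, a Jacobian contribution $\log(|\jac\phi^{-\xi h}_y(x)|/|\jac\phi^{\xi h}_x(y)|)$; this last term is $O(h^2)$ via $\log\det(I+M)=\operatorname{tr}(M)+O(\norm{M}^2)$, because the linear-in-$h$ parts of the two log-determinants are opposite in sign and cancel in the ratio.

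\emph{Taylor expansion and moment integration.} I would next substitute $\delta:=y-x=\sqrt{2h}\chi+O(h\norm{\nabla U(x)}+h^{3/2}\norm{\chi})$ from Proposition \ref{prop:existence_y} and expand $U$, $\nabla U$ and $\gamma$ around $x$ up to order two, which is licit under Assumption \ref{as:U_Lipschitz} together with the assumed boundedness of the third derivative of $U$. The key algebraic point for $Q_2^\xi$ is that the midpoint evaluation $\gamma((x+y)/2)$, combined with the sign flip $\xi\mapsto-\xi$ in the reverse proposal, forces the $\gamma$-dependent cross terms in the two squared norms to be opposite and to cancel pairwise; together with the standard cancellation of the drift-noise cross terms behind the MALA rate (cf.\ \cite{MR2583309}), this leaves $\log r^2_\xi(x,y)=h^{3/2}\Theta_2(x,\chi,h)$ with $\Theta_2$ of polynomial growth in $\norm{\chi}$ and $\norm{\nabla U(x)}$. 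For $Q_1^\xi$, the endpoint evaluation of $\gamma$ defeats this cancellation: a residual term $\tfrac{\xi}{2}\langle\gamma(y)-\gamma(x),\sqrt{2h}\chi\rangle=O(h)$ survives whenever $\gamma$ is non-linear, so $\log r^1_\xi=O(h)$ only. Raising to the $2l$-th power and integrating over the Gaussian $\chi$ then yields the announced $h^{2l}$ and $h^{3l}$ rates; the factor $1+\norm{\nabla U(x)}^{4l}$ absorbs residual $\norm{\nabla U(y)}$-terms via the a priori estimate $\norm{\nabla U(y)}\leq (1+Ch)\norm{\nabla U(x)}+C\sqrt{2h}\norm{\chi}$ from Proposition \ref{prop:existence_y}, combined with Cauchy--Schwarz to split products.

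The main obstacle is the algebraic bookkeeping in the Taylor-expansion step for $Q_2^\xi$: identifying precisely how the $\gamma$-dependent quadratic and Jacobian contributions from the centered discretization match and cancel up to order $h^{3/2}$, in analogy with the drift-term cancellation behind the $h^{3/2}$ rate of standard MALA. Once this structural cancellation is verified, the remaining moment estimates are routine.
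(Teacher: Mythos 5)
Your strategy is essentially the paper's: bound $1-\alpha$ by $\abs{\log r}$, expand the explicit log-ratio in powers of $h$ using the a priori estimates of Proposition \ref{prop:existence_y}, isolate the surviving $O(h)$ term for $Q_1^\xi$ (your $\tfrac{\xi}{2}\langle\gamma(y)-\gamma(x),\sqrt{2h}\chi\rangle$ is, after expansion, the paper's $-\xi h\langle \chi, JD^2U(x)\cdot\chi\rangle$) versus the $O(h^{3/2})$ remainder for $Q_2^\xi$, then take Gaussian moments and absorb $\norm{\nabla U(y)}$ via the a priori bound. The paper treats both kernels at once with an interpolation parameter $\theta\in\{0,1\}$ and, crucially, writes out the full eight-term expression for $\log r(x_\xi,y_\xi)$ before Taylor-expanding; the ``algebraic bookkeeping'' you defer as the main obstacle is precisely that computation, so your sketch asserts rather than verifies the one genuinely non-routine cancellation (that the midpoint evaluation of $\gamma$ combined with the sign flip $\xi\mapsto-\xi$ leaves no $O(h)$ term). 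The plan is the right one, but the proposition is not proved until that expansion is carried out.

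One step is wrong as justified, though its conclusion is salvageable. The Jacobian contribution to $\log r^2_\xi$ is $\log\abs{\jac{\phi^{-\xi h}_y(x)}}-\log\abs{\jac{\phi^{\xi h}_x(y)}}$, a \emph{difference} of log-determinants; if their linear-in-$h$ parts are opposite in sign, they do not cancel in this difference but add up to $2\operatorname{tr}(M)$ with $M=\tfrac{h\xi}{2}JD^2U\left(\tfrac{x+y}{2}\right)$, which your argument as stated would leave at order $h$, not $h^2$. The term is in fact harmless because $\operatorname{tr}(JS)=0$ whenever $J$ is skew-symmetric and $S$ is symmetric; better, Lemma \ref{lem:jac_pm} shows the Jacobian ratio equals $1$ exactly, which is why no Jacobian term appears in the paper's expression for $\log r$. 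Replace your trace argument by an appeal to that lemma (or by the vanishing-trace observation) and this point is fine.
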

\begin{proof}[Proof of Proposition \ref{prop:reject_ratio_fp}]
  To deal with both results at once, we define
  for all $x\in\R^d$, for all $\xi\in\{-1,1\}$,
  and for all $\theta\in\{0,1\}$, the random variable $Y_{x_\xi,\theta}$
  that satisfies the following implicit equation almost surely,
  \begin{align*}
    Y_{x_\xi,\theta} = x - h \nabla U(x) -  h \xi J \left( \theta \nabla
    U\left(\frac{x+Y_{x_\xi,\theta}}{2}\right)+(1-\theta)\nabla U(x) \right)+\sqrt{2h}\chi,
  \end{align*}
  where $\chi$ is a standard normal deviate in $\R^d$.
  Well-posedness of $Y_{x_\xi,\theta}$ is given by Proposition \ref{prop:existence_y}.
  We set $R^\xi_\theta$ the associated proposal kernel.
  We get $R_0^\xi=Q_1^\xi$
  and $R_1^\xi=Q_2^\xi$.
  We define the Metropolis-Hastings ratio $r(x_\xi,y_\xi)$ for proposing $y_\xi$
  from $x_\xi$ with kernel $R_\theta^\xi$ by,
  \begin{align*}
    r(x_\xi,y_\xi)=\dfrac{\pi(y_{-\xi}) R_\theta(y_{-\xi},x_{-\xi})}
    {\pi(x_{\xi})R_\theta(x_{\xi},y_{\xi})},
  \end{align*}
  where we set $\chi\in\R^d$ such that,
  \begin{align*}
    y = x - h \nabla U(x) -  h \xi J \left( \theta \nabla
    U\left(\frac{x+y}{2}\right)+(1-\theta)\nabla U(x) \right)+\sqrt{2h}\chi,
  \end{align*}
  Then, a straightforward computation gives,
  \begin{align*}
    \log(r(x_\xi,y_\xi))
    =&U(x)-U(y)+\langle y-x,\nabla U(x) \rangle\\
    &+\dfrac{1}{2} \langle y-x,\nabla U(y)-\nabla U(x) \rangle \\
    &- \dfrac{\xi}{2}(1-\theta) \langle y-x, J (\nabla U(y)-\nabla U(x)) \rangle \\
    &+\dfrac{ h}{4}\left( \norm{\nabla U(x)}^2 - \norm{\nabla U(y)}^2 \right)\\
    &+\dfrac{ h}{2} \xi \theta \langle J\nabla U\left( \dfrac{x+y}{2} \right),
    \nabla U(x)+\nabla U(y)  \rangle \\
    &+ \dfrac{ h}{2} \xi^2 \theta (1-\theta)
    \langle J\nabla U\left( \dfrac{x+y}{2} \right),
    J(\nabla U(x)-\nabla U(y)) \rangle\\
    &+\dfrac{ h}{4} (1-\theta)^2 \xi^2
    \left( \norm{J\nabla U(x)}^2-\norm{J\nabla U(y)}^2 \right).
  \end{align*}
  A Taylor expansion of these terms, making use of Proposition \ref{prop:existence_y}
  and noticing that $\theta (1-\theta)=0$ for $\theta\in\{0,1\}$ leads to
  \begin{align*}
    \log(r(x_\xi,y_\xi))
    =&-\xi  h(1-\theta)\langle \chi,J D^2U(x)\cdot \chi \rangle\\
    &+O( h^{3/2}(\norm{\nabla U(x)}^2+\norm{\chi}^2+\norm{\chi}^3))\\
    =&O( h(1-\theta)\norm{\chi}^2
    + h^{3/2}(\norm{\nabla U(x)}^2+\norm{\chi}^2+\norm{\chi}^3)).
  \end{align*}
  Moreover,
  \begin{align*}
    (1-1\wedge r(x_\xi,y_\xi))^{2l}
    =&O(\log(r(x_\xi,y_\xi))^{2l})\\
    =&O( h^{2l}(1-\theta)^{2l}\norm{\chi}^{4l}
    + h^{3l}(\norm{\nabla U(x)}^{4l}+\norm{\chi}^{4l}+\norm{\chi}^{6l})),
  \end{align*}
  and thus there exists $C(l)>0$ such that,
  \begin{align*}
    \esp{(1-1\wedge r(x_\xi,Y_{x_\xi,\theta}))^{2l}}
    \leq C(l)(1+\norm{\nabla U(x)}^{4l}) h^{3l}+C(l) h^{2l}(1-\theta)^{2l}
  \end{align*}
\end{proof}

The previous method is quite efficient since no computation of the Hessian of
$\log \pi$ is required.
Nevertheless, global Lipschitzness of $\nabla U$ is required to justify the method.
We propose a last kernel, denoted by $Q^\xi_3$, that does not require this hypothesis
but still require $\gamma$ to be of gradient type and the computation of the
Hessian of $U$.
More precisely, $Q^\xi_3(x,dy)$ is the law of $y$, solution of
\begin{align}
  \label{eq:Q3_eq}
  \left(Id+\frac{ h\xi}{2} J \hess(U)(x)\right)(y-x)=- h (Id+\xi J)\nabla U(x)
  + \sqrt{2 h} \chi,
\end{align}
where $\chi$ a standard normal deviate.
Then, $Q_3^\xi$ is given by
\begin{align}
  \label{eq:Q3_density}
  Q_3^\xi(x,dy)=
  \dfrac{1}{(4\pi h)^{d/2}}
  \det{M^\xi(x)}
  \exp\left(\frac{1}{4 h}\norms{M^\xi(x)(y-x) + h (Id+\xi J)\nabla U(x)}^2\right)dy,
\end{align}
with $M^\xi(x)=Id+\dfrac{ h \xi}{2}J\hess{U}(x)$.
This transition kernel offers also a rejection rate $a^3_{ h,\xi}$
of order $ h^{3/2}$.
\begin{prop}
  \label{prop:reject_ratio_h}
  Suppose that $U$ is three times differentiable with bounded second and third
  derivatives.
  Then for all $l\geq 1$, there exists $C(l)>0$ and $ h_0>0$ such that
  for all positive $ h< h_0$, and for all $x\in\R^d$,
  \begin{align*}
    \esp{(1-\alpha_{ h,\xi}^3(x,Y^2_{h,\xi}))^{2l}}\leq C(l)(1+\norm{\nabla U(x)}^{4l})h^{3l}
  \end{align*}
\end{prop}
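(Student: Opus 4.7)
The plan is to imitate the proof of Proposition~\ref{prop:reject_ratio_fp}: I would write the Metropolis--Hastings log-ratio explicitly for the kernel $Q_3^\xi$, use~\eqref{eq:Q3_eq} to substitute $y-x$ in favour of $\sqrt{2h}\,\chi$ plus drift terms, Taylor-expand every smooth function of $y$ around $x$, and verify that all $O(h)$ contributions cancel. The residual then has size $h^{3/2}$ times a polynomial in $\norm{\nabla U(x)}$ and $\norm{\chi}$, and the stated moment bound follows by integration against the Gaussian density of $\chi$.

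Concretely, fix $\chi$ a standard normal deviate, let $y = Y^3_{h,\xi}$ be given by~\eqref{eq:Q3_eq}, and write $r(x_\xi,y_\xi) = \pi(y) Q_3^{-\xi}(y,x) / (\pi(x) Q_3^\xi(x,y))$ using~\eqref{eq:Q3_density}. Its logarithm splits into (i) the potential contribution $U(x)-U(y)$; (ii) the log-determinant contribution $\log\det M^{-\xi}(y)-\log\det M^\xi(x)$; and (iii) the quadratic-form contribution
$$\tfrac{1}{4h}\bigl(\norm{M^\xi(x)(y-x) + h(Id+\xi J)\nabla U(x)}^2 - \norm{M^{-\xi}(y)(x-y) + h(Id-\xi J)\nabla U(y)}^2\bigr).$$
By~\eqref{eq:Q3_eq} the first squared norm equals $2h\norm{\chi}^2$; introducing $\chi'$ via $M^{-\xi}(y)(x-y) + h(Id-\xi J)\nabla U(y) = \sqrt{2h}\,\chi'$, the third contribution becomes $\tfrac{1}{2}(\norm{\chi}^2-\norm{\chi'}^2)$, and $\chi'$ can be written explicitly in terms of $\chi$ and derivatives of $U$ at $x,y$ using $M^{-\xi}(y)^{-1} = Id - (h\xi/2) J\hess U(y) + O(h^2)$.

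Next I would Taylor-expand $U(y), \nabla U(y), \hess U(y)$ around $x$, inserting $y-x = \sqrt{2h}\,\chi + O(h\norm{\nabla U(x)} + h^{3/2}\norm{\chi}^2)$ from~\eqref{eq:Q3_eq} to the required order, and collect terms in powers of $h^{1/2}$. Two structural observations make the cancellations work: first, $\mathrm{Tr}(J\hess U)=0$ because $J$ is skew and $\hess U$ symmetric, so the log-determinant contribution is in fact $O(h^2)$ and plays no role at order $h$; second, the antisymmetric-in-$\xi$ part of the quadratic-form difference (which collects the $\xi$-linear interactions between $(Id+\xi J)\nabla U(x)$, the Hessian correction $(h\xi/2) J \hess U \cdot$, and $\chi$) combines with the expansion of $U(x)-U(y)$ to cancel at order $h$, for exactly the same algebraic reason as the midpoint trick used for $Q_2^\xi$: \eqref{eq:Q3_eq} is the Newton linearisation of the implicit midpoint scheme~\eqref{eq:Q2_eq}. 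The remainder is then $\log r(x_\xi,y_\xi) = O(h^{3/2}(\norm{\nabla U(x)}^2 + \norm{\chi}^2 + \norm{\chi}^3))$.

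The conclusion is standard: the elementary inequality $(1-1\wedge r)^{2l}\le |\log r|^{2l}$ yields pointwise
$$(1-\alpha_{h,\xi}^3(x,y))^{2l} = O\bigl(h^{3l}(\norm{\nabla U(x)}^{4l} + \norm{\chi}^{4l} + \norm{\chi}^{6l})\bigr),$$
and integrating against $\chi$, which has Gaussian moments of every order, gives the bound with a constant $C(l)$ independent of $x$ and $h$. The main obstacle I anticipate is the order-$h$ bookkeeping: one must track simultaneously the Taylor expansion of $U$ to third order, the expansion of $\chi'$ via the inverse of $M^{-\xi}(y)$, and the symmetric/antisymmetric decomposition of the resulting quadratic form, and verify that every $h$-coefficient vanishes. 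The skew-symmetry of $J$, the symmetry of $\hess U$, and the symmetric handling of $\nabla U$ at the two endpoints in~\eqref{eq:Q3_eq} are the structural reasons the cancellation succeeds—none of which hold for the explicit Euler kernel $Q_1^\xi$, whose rejection is indeed only $O(h)$.
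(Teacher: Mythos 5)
Your proposal is correct and follows exactly the route the paper intends: the paper's own proof of this proposition is literally the single sentence ``the proof involves the same arguments as in Proposition~\ref{prop:reject_ratio_fp} and is left to the reader,'' and your sketch is a faithful (and more detailed) instantiation of that strategy --- expand the log-ratio, cancel the $O(h)$ terms using the skew-symmetry of $J$ and the symmetry of $\hess U$ (including the observation that $\operatorname{Tr}(J\hess U)=0$ kills the determinant contribution at order $h$), bound the residual by $O\bigl(h^{3/2}(\norm{\nabla U(x)}^2+\norm{\chi}^2+\norm{\chi}^3)\bigr)$, and integrate Gaussian moments. Your remark that \eqref{eq:Q3_eq} is the linearisation of the midpoint scheme \eqref{eq:Q2_eq} is a correct and useful explanation of why the same cancellation occurs.
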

\begin{proof}[Proof of Proposition \ref{prop:reject_ratio_h}]
  The proof involves the same arguments as in Proposition \ref{prop:reject_ratio_fp}
  and is left to the reader.
\end{proof}
\vspace{2mm}

Even though Kernel $Q_2^\xi$ enables to circumvent the bad acceptance ratio of
the explicit proposal given by $Q_1^\xi$, it raises some difficulties.
First, it requires the potential $U$ to be globally Lipschitz, which
is quite restrictive.
To use the GMALA method in the non globally Lipschitz case, one can resort to
importance sampling (to get back to a globally Lipschitz setting),
or use a globally Lipschitz truncation of the potential to build the proposition kernel.
The same idea is used in MALTA \cite{roberts1996}.
This last trick might lead to high average rejection ratio in the truncated regions
of the potential.
Moreover, the computation of the proposed move requires the resolution of a
nonlinearly implicit equation, that can be solved by a fixed point method,
which is more costly than the Euler-Maruyama discretization used by MALA.
Specific methods of preconditioning should be considered to accelerate the
convergence of the fixed point.
\vspace{2mm}

It would be interesting to propose a proposition kernel based on an explicit
scheme, that would achieve a better acceptance ratio than $Q_1^\xi$,
with a lower computational cost than $Q_2^\xi$.
The question of decreasing the Metropolis-Hastings rejection rate has been
recently studied in \cite{fathi2015improving}.
The authors propose a proposition kernel constructed from an explicit scheme,
which is a correction (at order $ h^{3/2}$) of the standard Euler-Maruyama
proposal.
Nevertheless, in our case this approach would not enable us to construct a proposition kernel based on
an explicit scheme with a high average acceptance ratio.
Work has also been done in this direction with the Metropolis–Hastings algorithm
with delayed rejection, proposed in \cite{TierneyMira99}, and more recently
\cite{banterle2015accelerating}.
Yet, it is unclear whether this approach could be use efficiently in our case.

\subsection{Convergence of GMALA}
\label{sec:GMALA_conv}

In this section, we only treat the case in which GMALA is used with $Q_2^\xi$.
To obtain a central limit theorem for the Markov chain built with GMALA,
it is convenient to prove the geometric convergence in total variation norm
toward the target measure $\pi$.
Because we require $\nabla U$ to be globally Lipschitz to ensure
well-posedness of GMALA with transition kernel $Q_2^\xi$,
MALA would be likely to benefit from geometric convergence in this case.
Indeed, it is well-known that MALA can be geometrically ergodic
when the tails of $\pi$ are heavier than Gaussian, under suitable hypotheses
(see Theorem 4.1 \cite{roberts1996}).
For strictly lighter tails, we cannot hope for geometric convergence
(see Theorem 4.2 \cite{roberts1996}).
This part is devoted to showing that under some hypotheses, GMALA can benefit
from geometric convergence.
\vspace{2mm}

Classically for MALA, the convergence follows on from the aperiodicity and the irreducibility
of the chain, since by construction the chain is positive with invariant measure $\pi$.
Under these conditions, the geometric convergence can be proven by exhibiting a
suitable Foster-Lyapunov function $V$ such that,
\begin{align}
  \label{eq:drift}
  \lim_{\abs{x}\to\infty} \dfrac{PV(x)}{V(x)}<1.
\end{align}
In our case, the situation is slightly different.
The Markov chain built with GMALA is still aperiodic and phi-irreducible.
This is a simple consequence of the surjectivity of the application
$\Phi_x^{h\xi}$ in Lemma \ref{lem:diffeo}.
To be able to prove a drift condition such as \eqref{eq:drift},
it is usually required to be able to show that the acceptance probability
for a proposed move starting from $x_\xi$ does not vanish in expectation
for large $x$, which is not always true in our setting.
More precisely, we can only say that the maximum of the two average acceptance
probabilities for the proposed moves starting from $x_\xi$ and $x_{-\xi}$
does not vanishes for large $x$.
Then one strategy could be to choose a Foster-Lyapunov function $V$ that decreases
in expectation when $\xi$ is changed to $-\xi$ after a rejection.
An other strategy, that we present in the following and that seems to be more
natural and more easily generalizable to potential $U$ that does
not satisfy the specific hypotheses we use in this section.
We show that the odd and the even subsequences of the Markov chain
converge geometrically quickly to the target measure $\pi$ by showing a drift condition
on $P^2$: the transition probability kernel of the two-steps Markov chain
defined by
\begin{align*}
  P^2(x,dy)=\int_E P(x,dz)P(z,dy),\quad\forall x\in E,
\end{align*}
under the following assumption.
\begin{assumption}
\label{as:control_U}
  We suppose that $U$ is three times differentiable, and that,
  \begin{enumerate}
    \item eigenvalues of $D^2 U(x)$ are uniformly upper bounded and lower
    bounded away from $0$, for $x$ outside of a ball centered in $0$,
    \item the product $\norm{D^3 U(x)}\norm{\nabla U(x)}$ is bounded
    for $x$ outside of a ball centered in $0$.
  \end{enumerate}
\end{assumption}

\begin{prop}
  \label{prop:drift2}
  Suppose assumption \ref{as:control_U}.
  There exists $s>0$ and $h_0>0$ such that for all $h\leq h_0$, for $\xi\in\{-1,1\}$,
  \begin{align}
    \label{eq:drift2}
    \lim_{\norm{x}\to + \infty} \dfrac{P^2V_s(x_\xi)}{V_s(x_\xi)}=0,
  \end{align}
  where the Foster-Lyapunov function $V$ is defined by $ V_s(x_\xi)=\exp( sU(x) )  $.
\end{prop}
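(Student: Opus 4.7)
The approach is to establish a drift condition directly for the two-step kernel $P^2$, thereby bypassing the difficulty that the direction $\xi$ is flipped on every rejection. Writing $\alpha^\xi(x) = \int Q^\xi(x,y) A^\xi(x,y)\,dy$ and using that a double rejection returns the chain to the state $x_\xi$, we decompose
\begin{align*}
P^2 V_s(x_\xi)
&= \iint Q^\xi(x,y) A^\xi(x,y)\, Q^\xi(y,z) A^\xi(y,z)\, V_s(z)\,dy\,dz \\
&\quad + \int Q^\xi(x,y) A^\xi(x,y)\,(1-\alpha^\xi(y))\, V_s(y)\,dy \\
&\quad + (1-\alpha^\xi(x)) \int Q^{-\xi}(x,z) A^{-\xi}(x,z)\, V_s(z)\,dz \\
&\quad + (1-\alpha^\xi(x))(1-\alpha^{-\xi}(x))\, V_s(x),
\end{align*}
using that $V_s$ only depends on the spatial variable.

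For the three terms involving at least one acceptance, the plan is to drop $A^\xi\leq 1$ and control everything by the one-step moment $\int Q^\xi(x,y) V_s(y)\,dy / V_s(x)$. Using Proposition \ref{prop:existence_y} to write $y-x = -h\nabla U(x) + \sqrt{2h}\chi + O(h\norm{\nabla U(x)} + h^{3/2}\norm{\chi})$, Taylor-expanding $U(y) - U(x) = \nabla U(x)\cdot(y-x) + \tfrac{1}{2}(y-x)^T D^2U(\tilde x)(y-x)$, and invoking the uniform upper bound on $D^2U$ from Assumption \ref{as:control_U}(1), a Gaussian moment-generating-function computation yields, for $s$ and $h$ small enough and constants $\rho,C>0$ independent of $x$ and $\xi$,
\[
\int Q^\xi(x,y)\, V_s(y)\, dy \;\leq\; V_s(x)\,\exp\!\bigl(-\rho s h\, \norm{\nabla U(x)}^2 + C(s+s^2)h\bigr).
\]
The lower-bound part of Assumption \ref{as:control_U}(1) forces $\norm{\nabla U(x)}\to\infty$ as $\norm{x}\to\infty$, so this estimate reduces the first three terms to $o(V_s(x))$.

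The main obstacle is the double-rejection term $(1-\alpha^\xi(x))(1-\alpha^{-\xi}(x))$, because the moment bound of Proposition \ref{prop:reject_ratio_fp} degenerates for large $\norm{x}$. The plan is to show that in fact $\alpha^\xi(x)\to 1$ as $\norm{x}\to\infty$ for each $\xi\in\{-1,1\}$. Refining the expansion of $\log r^\xi(x,y)$ from the proof of Proposition \ref{prop:reject_ratio_fp} (case $\theta=1$) to order $h^2$ in the deterministic part, and using that the implicit displacement reads $y - x = -h(I+\xi J)\nabla U(x) + \sqrt{2h}\chi + O(h^2)$, one finds a leading deterministic contribution $\tfrac{h^2}{2}\nabla U(x)^T D^2U(x) \nabla U(x)$ together with $\xi$-linear $h^2$-corrections of the form $\tfrac{h^2\xi}{2}\nabla U(x)^T(D^2U(x) J - JD^2U(x))\nabla U(x)$; by Assumption \ref{as:control_U}(1) the first is bounded below by $\tfrac{\mu h^2}{2}\norm{\nabla U(x)}^2$, and the latter has coefficient controlled by $\norm{J}$ and the spectrum of $D^2U$, so it can be absorbed by choosing $h_0$ small enough. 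The stochastic fluctuations form a Gaussian of standard deviation $O(h^{3/2}\norm{\nabla U(x)})$, while the cubic Taylor remainders of $U$ that would a priori grow like $\norm{D^3U(x)}\norm{\nabla U(x)}^3$ are tamed by Assumption \ref{as:control_U}(2), which forces $\norm{D^3U(x)}\norm{\nabla U(x)}$ to be bounded. The resulting signal-to-noise ratio $h^{1/2}\norm{\nabla U(x)}$ diverges as $\norm{x}\to\infty$, giving $\log r^\xi(x,Y)\to+\infty$ in probability under the proposal and, by dominated convergence, $\alpha^\xi(x)\to 1$ for both $\xi=\pm 1$. The main delicacy is verifying that the $\xi$-dependent $h^2$ corrections do not overwhelm the positive symmetric $h^2$ term uniformly in $\xi$, which is where the smallness of $h_0$ relative to the spectral bounds of Assumption \ref{as:control_U}(1) is used.
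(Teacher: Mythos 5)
Your decomposition of $P^2V_s$ and your treatment of the three terms containing at least one acceptance are essentially the paper's argument (the paper packages the one-step estimate $\int Q^\xi(x,y)V_s(y)\,dy\leq V_s(x)e^{-\rho sh\norm{\nabla U(x)}^2+sC}$ as Lemma \ref{lem:contraction_V}). The gap is in the double-rejection term. You claim that $\alpha^\xi(x)\to 1$ for \emph{both} $\xi=\pm 1$, by arguing that the $\xi$-linear correction $\tfrac{h^2\xi}{2}\langle\nabla U(x),D^2U(x)J\nabla U(x)\rangle$ can be ``absorbed by choosing $h_0$ small enough'' into the positive term $\tfrac{h^2}{2}\langle\nabla U(x),D^2U(x)\nabla U(x)\rangle\geq\tfrac{\mu h^2}{2}\norm{\nabla U(x)}^2$. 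This absorption does not work: both terms scale as $h^2\norm{\nabla U(x)}^2$, so shrinking $h$ multiplies them by the same factor and does not change their ratio. The cross term is controlled only by $M\norm{J}\,h^2\norm{\nabla U(x)}^2$ with $M$ the upper spectral bound of $D^2U$, and nothing in Assumption \ref{as:control_U} forces $M\norm{J}<\mu$; e.g.\ with $D^2U=\mathrm{diag}(1,\lambda)$, $J=\alpha\bigl(\begin{smallmatrix}0&1\\-1&0\end{smallmatrix}\bigr)$ and $\nabla U$ along $(1,1)$, the sum of the two $h^2$ terms is $\tfrac{h^2}{2}\bigl((1+\lambda)+\xi\alpha(1-\lambda)\bigr)a^2$, which is negative for one sign of $\xi$ once $\alpha$ is large. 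Indeed the paper states explicitly that the acceptance probability from $x_\xi$ \emph{can} vanish as $\norm{x}\to\infty$; only the maximum over $\xi$ and $-\xi$ is bounded away from $0$.

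The correct (and sufficient) observation, which is the content of the paper's Lemma \ref{lem:alpha_pm}, is that for every $x$ the sign condition $\xi\langle\nabla U(x),D^2U(x)J\nabla U(x)\rangle\geq 0$ holds for at least one of $\xi\in\{-1,1\}$; for that $\xi$ the cross term reinforces the positive definite term and your signal-to-noise argument goes through, giving $\alpha^\xi(x)\to 1$. Since the product $(1-\alpha^\xi(x))(1-\alpha^{-\xi}(x))$ vanishes as soon as \emph{one} factor does (the other being bounded by $1$), this repairs the argument without any smallness condition on $J$. So your overall strategy is the right one, but the step ``both acceptance probabilities tend to one'' must be replaced by ``at least one does, and one suffices.''
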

\vspace{2mm}

\begin{rem}
  In order to prove the drift condition \eqref{eq:drift}, one can use the Foster-Lyapunov
  $\widetilde{V}$ defined by,
  \begin{align*}
    \widetilde{V_s}(x_\xi)=\exp\left( sU(x)+s\dfrac{\xi h^2 \langle \nabla U(x), D^2 U(x) J\nabla U(x) \rangle}{\abs{\xi h^2 \langle \nabla U(x), D^2 U(x) J\nabla U(x) \rangle }} \right),
  \end{align*}
  for $s$ small enough.
  Yet, this proof is left to the reader, but uses the arguments developed in the
  following.
\end{rem}

Assumption \ref{as:control_U} is not meant to be sharp.
We are not striving for optimality here, but instead we aim to propose a simple
criterion which is likely to be satisfied in smooth cases.

The first step to prove equation \eqref{eq:drift2}, is to show that the proposed
move decreases the Lyapunov function $V_s$ in expectation.
\begin{lem}
  \label{lem:contraction_V}
  Under assumption \ref{as:U_Lipschitz},
  there exists $s_0>0$ such that for all $s<s_0$,
  there exists $C_1,C_2>0$ such that for all $h<h_0$ given by Proposition \ref{prop:existence_y},
  and for all $x\in\R^d$ large enough (depending on $h$),
  \begin{align*}
    \esp{V_s(Y_{h,\xi})}\leq V_s(x)e^{-s\frac{3h(1-C_1 h)}{4}\norm{\nabla U(x)}^2+sC_2},
  \end{align*}
  where $Y_{h,\xi}$ is defined in Proposition \ref{prop:existence_y}.
  Thus, for $h$ small enough,
  \begin{align*}
    \esp{V_s(Y_{h,\xi})}\leq V_s(x)e^{-s\frac{h}{2}\norm{\nabla U(x)}^2+sC_2},
  \end{align*}
  and for $x$ large enough,
  \begin{align*}
    \esp{V_s(Y_{h,\xi})}\leq V_s(x)e^{-s\frac{h}{4}\norm{\nabla U(x)}^2}.
  \end{align*}
\end{lem}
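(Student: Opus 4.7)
The plan is to expand $U(y)$ around $x$ to second order. Under Assumption \ref{as:U_Lipschitz}, $D^2U$ is uniformly bounded by some constant $M$, so
$$U(y) \leq U(x) + \langle \nabla U(x), y-x\rangle + \frac{M}{2}\norm{y-x}^2.$$
I would then substitute the implicit definition of $y$ given in Proposition \ref{prop:existence_y}, namely
$$y = x - h\nabla U(x) - h\xi\,\gamma\!\left(\frac{x+y}{2}\right) + \sqrt{2h}\,\chi,$$
together with the a priori estimate $\norm{y-x} = O(h\norm{\nabla U(x)} + \sqrt{h}\norm{\chi})$ provided by the same proposition, and compute the inner product
$$\langle \nabla U(x), y-x\rangle = -h\norm{\nabla U(x)}^2 - h\xi\,\langle \nabla U(x), \gamma((x+y)/2)\rangle + \sqrt{2h}\,\langle \nabla U(x),\chi\rangle.$$

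The middle term is the delicate one. Taking $\gamma = J\nabla\log\pi$ with $J$ skew-symmetric, one has the pointwise cancellation $\langle \nabla U(x),\gamma(x)\rangle = 0$. Writing $\gamma((x+y)/2) = \gamma(x) + O(\norm{y-x})$ (valid since Assumption \ref{as:U_Lipschitz} makes $\gamma$ Lipschitz) and re-injecting the a priori bound on $\norm{y-x}$, this term is $O\bigl(h\norm{\nabla U(x)}(h\norm{\nabla U(x)} + \sqrt{h}\norm{\chi})\bigr)$. A Young inequality absorbs the cross piece into $\varepsilon sh\norm{\nabla U(x)}^2 + C(\varepsilon)sh^2\norm{\chi}^2$, while the quadratic remainder $\tfrac{M}{2}\norm{y-x}^2$ is bounded by $C(h^2\norm{\nabla U(x)}^2 + h\norm{\chi}^2)$. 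Collecting everything and multiplying by $s$ yields, for constants $C',C''$ depending on $\varepsilon$,
$$sU(y) - sU(x) \leq -sh(1 - \varepsilon - C'h)\norm{\nabla U(x)}^2 + s\sqrt{2h}\,\langle \nabla U(x),\chi\rangle + C''sh\norm{\chi}^2.$$

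To control the resulting exponential moment, I would decouple the linear and quadratic Gaussian pieces via Cauchy--Schwarz:
$$\esp{e^{s\sqrt{2h}\langle \nabla U(x),\chi\rangle + C''sh\norm{\chi}^2}} \leq \esp{e^{2s\sqrt{2h}\langle \nabla U(x),\chi\rangle}}^{1/2}\esp{e^{2C''sh\norm{\chi}^2}}^{1/2}.$$
The first factor equals $e^{2s^2h\norm{\nabla U(x)}^2}$ by the Gaussian moment generating function, and the second equals $(1-4C''sh)^{-d/4}$, which for $sh$ small enough is bounded by $e^{sC_2}$ with $C_2$ independent of $h\leq h_0$. Altogether,
$$\esp{V_s(Y_{h,\xi})} \leq V_s(x)\exp\!\bigl(-sh(1 - \varepsilon - C'h - 2s)\norm{\nabla U(x)}^2 + sC_2\bigr).$$
Choosing $\varepsilon$ and $s_0$ so that $1 - \varepsilon - 2s_0 \geq 3/4$ produces the first inequality with $C_1 = 4C'/3$. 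The second inequality follows by further shrinking $h$ so that $C_1 h$ is small enough to replace the $3h/4$ factor by $h/2$ while re-absorbing the slack into $C_2$, and the third by restricting $x$ to a region where $\tfrac{sh}{4}\norm{\nabla U(x)}^2$ dominates $sC_2$.

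The main obstacle is the handling of the implicit nonreversible term $h\xi\langle \nabla U(x),\gamma((x+y)/2)\rangle$: one must exploit the skew-symmetry of $J$ to kill the leading $O(h)$ contribution, iterate the a priori estimate on $\norm{y-x}$ through the Lipschitz bound on $\gamma$, and calibrate Young's inequality tightly enough that the coefficient of $-sh\norm{\nabla U(x)}^2$ remaining after combining with the Gaussian correction $2s^2h\norm{\nabla U(x)}^2$ is at least the $3/4$ demanded by the statement.
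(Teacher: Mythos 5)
Your proposal is correct and follows essentially the same route as the paper: a second-order Taylor expansion of $U(y)$, cancellation of the leading nonreversible contribution via the skew-symmetry identity $\langle \nabla U(x), J\nabla U(x)\rangle=0$ combined with the Lipschitz bound and the a priori estimate on $\norm{y-x}$ from Proposition \ref{prop:existence_y}, Young's inequality to absorb the cross terms, and integrability of the resulting Gaussian exponential moment for $s$ small. The only cosmetic difference is that you treat the linear Gaussian term $\sqrt{2h}\langle\nabla U(x),\chi\rangle$ exactly via its moment generating function and a Cauchy--Schwarz decoupling, where the paper absorbs it into the $O(\norm{\chi}^2)$ remainder by another application of Young's inequality; both yield the stated $3/4$ coefficient.
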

\begin{proof}[Proof of Lemma \ref{lem:contraction_V}]
  For all $y\in\R^d$, we set $\chi\in\R^d$ such that,
  \begin{align*}
    y = x - h \nabla U(x) -  h \xi J \left( \theta \nabla
    U\left(\frac{x+y}{2}\right)+(1-\theta)\nabla U(x) \right)+\sqrt{2h}\chi,
  \end{align*}
  A Taylor expansion of $y$ in $h$ yields,
  \begin{align*}
    U(y)
    &=U(x)+\nabla U(x)\cdot(y-x)+O(\norm{y-x}^2)\\
    &=U(x)-h\norm{\nabla U(x)}^2 + \sqrt{2h}\nabla U(x)\cdot\chi
     +h\xi\nabla U(x)\cdot J \nabla U(\frac{x+y}{2})
     +O(\norm{x-y}^2).
  \end{align*}
  Noticing that
  \begin{align*}
    h\xi\nabla U(x)\cdot J \nabla U(\frac{x+y}{2})
    = h\xi\nabla U(x)\cdot J (\nabla U(\frac{x+y}{2})-\nabla U(x)),
  \end{align*}
  the Cauchy-Schwarz and the triangular inequality yield,
  \begin{align*}
    \abs{h\xi\nabla U(x)\cdot J \nabla U(\frac{x+y}{2})}
    \leq C h \norm{\nabla U(x)} \norm{x-y}.
  \end{align*}
  Thus, by Young inequality and Proposition \ref{prop:existence_y}, for $h\leq 1$,
  \begin{align*}
    U(y)\leq U(x)-\dfrac{3h(1-Ch)}{4}\norm{\nabla U(x)}^2
    +O(\norm{\chi}^2),
  \end{align*}
  The conclusion holds for small enough $s$ such that $e^{sO(\norm{G}^2)}$
  is integrable, where $G$ is a standard normal deviate in $\R^d$.
\end{proof}

Classically (\cite{roberts1996}), proofs of geometric convergence of MALA require to show that
the average acceptance ratio of the proposed move from $x$, does not vanishes when $x$ is large.
Namely that there exists $\epsilon>0$ such that
\begin{align*}
  I(x)=\left\{y:\alpha(x,y) \leq 0 \right\}
\end{align*}
asymptotically has $q$-measure $0$, where $\alpha$ is the acceptance probability.
Our case is sightly different since it is possible that the average acceptance ratio
of the proposed move from $x_\xi\in E$ vanishes when $\norm{x}\to+\infty$,
which leads to a rejection and to the switching $\xi\leftarrow -\xi$ with probability
close to one.
Yet the average acceptance probability of the next proposed move from $x_{-\xi}$
is close to one.
This behavior is described by the following Lemma.
\begin{lem}
  \label{lem:alpha_pm}
  Under Assumption \ref{as:control_U}, there exists $h_0>0$ such that for all $h<h_0$
  and for all $\epsilon>0$, there exists $C(h,\epsilon)>0$ such that
  for all $x_\xi\in E$ such that $\norm{x}\geq C(h,\epsilon)$,
  \begin{align*}
    \xi \langle \nabla U(x),D^2 U(x)\cdot J \nabla U(x)\rangle\geq 0
    \Longrightarrow
    \prob{\alpha^2_{h,\xi}(x_\xi,y_\xi)=1}\geq 1-\epsilon,
  \end{align*}
  where $y$ is defined by Equation \eqref{eq:Q2_eq}.
\end{lem}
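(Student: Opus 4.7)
The plan is to refine the Taylor expansion of the log-ratio $\log r(x_\xi,y_\xi)$ associated with $Q_2^\xi$ (i.e.\ the expression derived in the proof of Proposition~\ref{prop:reject_ratio_fp} with $\theta=1$) by one order in $h$, so as to identify its leading behavior as $\norm{x}\to\infty$. Under the sign hypothesis of the lemma, this leading contribution will be nonnegative, and the remaining stochastic terms will be dominated on a high-probability Gaussian ball; this will yield $\log r\geq 0$, hence $\alpha=1$, with probability at least $1-\epsilon$.

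First I would start from the explicit expression for $\log r(x_\xi,y_\xi)$ derived in the proof of Proposition~\ref{prop:reject_ratio_fp}, substitute $y-x=\sqrt{2h}\chi-h(Id+\xi J)\nabla U(x)+O(h^{3/2})$ from Proposition~\ref{prop:existence_y}, and Taylor-expand $U$ and $\nabla U$ around $x$ to third order. Setting $\Delta:=y-x$, the symmetric contributions from $U(x)-U(y)+\langle \Delta,\nabla U(x)\rangle+\tfrac12\langle \Delta,\nabla U(y)-\nabla U(x)\rangle$ combine into a $\tfrac{1}{12}D^3U(x)[\Delta,\Delta,\Delta]$ cubic; the first-order skew contributions inside $\tfrac{h\xi}{2}\langle J\nabla U(\tfrac{x+y}{2}),\nabla U(x)+\nabla U(y)\rangle$ cancel thanks to $\langle J\nabla U,\nabla U\rangle=0$, the identity $\langle J\nabla U,D^2U\Delta\rangle+\langle JD^2U\Delta,\nabla U\rangle=0$ and skew-symmetry of $D^2U\,J\,D^2U$. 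The decisive contribution comes from $\tfrac{h}{4}(\norm{\nabla U(x)}^2-\norm{\nabla U(y)}^2)$: expanding $\nabla U(y)$ and substituting $\Delta$ produces
\begin{align*}
  \log r(x_\xi,y_\xi)=\tfrac{h^2}{2}\langle\nabla U(x),D^2U(x)\nabla U(x)\rangle
  +\tfrac{h^2\xi}{2}\langle\nabla U(x),D^2U(x)J\nabla U(x)\rangle+R(x,\chi,h),
\end{align*}
where $R$ collects a Gaussian cross-term of the form $-\tfrac{h\sqrt{2h}}{2}\langle\nabla U(x),D^2U(x)\chi\rangle$ and cubic-in-$\Delta$ remainders involving $D^3U$.

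Second, under Assumption~\ref{as:control_U} one has $D^2U(x)\ge\lambda I$ for $\norm{x}$ large, which forces $\norm{\nabla U(x)}\to+\infty$ and makes the first summand $\ge\tfrac{h^2\lambda}{2}\norm{\nabla U(x)}^2$; the second summand is nonnegative by the sign hypothesis of the lemma. Given $\epsilon>0$, fix $M=M(\epsilon)$ with $\prob{\norm{\chi}>M}<\epsilon$ and work on $\{\norm{\chi}\le M\}$. The bound $\norm{D^3U(x)}\norm{\nabla U(x)}\le K$ from Assumption~\ref{as:control_U} controls the cubic remainders: $D^3U[\Delta,\Delta,\Delta]$ is $O(M^3 h^{3/2}/\norm{\nabla U(x)})\to 0$, while $h\norm{D^3U}\norm{\nabla U}\norm{\Delta}^2$ and $h\norm{D^2U\Delta}^2$ are both $O(h^2 M^2)$. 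The stochastic cross-term is absorbed into the leading positive contribution by Young's inequality, $CMh^{3/2}\norm{\nabla U(x)}\le\tfrac{h^2\lambda}{4}\norm{\nabla U(x)}^2+C'(M,\lambda)h$. Choosing $\norm{x}\ge C(h,\epsilon)$ large enough that $\tfrac{h^2\lambda}{4}\norm{\nabla U(x)}^2$ dominates all $M$-dependent constants, we get $\log r\ge 0$ on $\{\norm{\chi}\le M\}$, and hence $\prob{\alpha_{h,\xi}^2(x_\xi,y_\xi)=1}\ge 1-\epsilon$.

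The main obstacle is the first step: one must push the expansion one order beyond what was needed in Proposition~\ref{prop:reject_ratio_fp} and systematically verify that every skew-symmetric term of order $h\norm{\Delta}$ cancels, so that the sign-definite $h^2\norm{\nabla U(x)}^2$ terms genuinely emerge as the leading contribution at infinity. Once this expansion is in place, the rest is routine bookkeeping using Assumption~\ref{as:control_U} and a single Gaussian tail bound.
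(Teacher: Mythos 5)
Your proposal is correct and follows essentially the same route as the paper: Taylor-expand the log Metropolis--Hastings ratio to isolate the leading deterministic term $\tfrac{h^2}{2}\left(\langle \nabla U(x),D^2U(x)\nabla U(x)\rangle+\xi\langle\nabla U(x),D^2U(x)J\nabla U(x)\rangle\right)$, use the uniform positive-definiteness of $D^2U$ and the sign hypothesis to make it nonnegative and coercive in $\norm{\nabla U(x)}$, and dominate the $O(h^3\norm{\nabla U(x)}^2)$ and Gaussian remainders on a high-probability ball $\{\norm{\chi}\le M\}$. Your write-up is merely more explicit than the paper's about the cancellation of the first-order skew terms and the Young-inequality absorption of the $h^{3/2}\norm{\nabla U(x)}\norm{\chi}$ cross-term, which the paper folds into its $O(\cdot)$ notation.
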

\begin{proof}[Proof of Lemma \ref{lem:alpha_pm}]
  We recall that $\alpha^2_{h,\xi}$ is defined for all $x_\xi,y_\xi\in E$
  by $\alpha^2_{h,\xi}(x_\xi,y_\xi)=1\wedge e^{r(x_\xi,y_\xi)}$, with,
  \begin{align*}
    r(x_\xi,y_\xi)
    &=U(x)-U(y)
    +\langle\sqrt{2h} \chi,\nabla U(x)\rangle
    +\dfrac{1}{2}\langle \sqrt{2h} \chi,\nabla U(y)-\nabla U(x) \rangle\\
    &-h\norm{\nabla U(x)}^2
    -h \langle \nabla U(y)-\nabla U(x), \nabla U(x) \rangle
    -\dfrac{h}{4}\norm{\nabla U(x)-\nabla U(y)}^2.
  \end{align*}
  The proof is done by computing a Taylor expansion in $h$ of this quantity to evaluate
  its sign in the asymptotic case $\norm{x}\to+\infty$.
  We denote by $\cdot$ the matrix vector product, and by $:$ and $\vdots$ respectively
  the double and triple dot products.
  \begin{align*}
    &U(x)-U(y)+\langle\sqrt{2h} \chi,\nabla U(x)\rangle-h\norm{\nabla U(x)}^2\\
    &\qquad=h\xi \nabla U(x)\cdot J\left( \nabla U\left( \dfrac{x+y}{2} \right)-\nabla U(x) \right)
    -\dfrac{1}{2}D^2 U(x):(y-x)^2+O(\norms{D^3 U(x)\vdots (y-x)^3})\\
    &\qquad=\dfrac{h}{2}\xi \nabla U(x)\cdot J\left( D^2 U(x)\cdot(y-x)+O(D^3 U(x):(y-x)^2) \right)\\
    &\qquad-\dfrac{1}{2}D^2 U(x):(y-x)^2+O(\norms{D^3 U(x)\vdots (y-x)^3})\\
    &\dfrac{1}{2}\langle \sqrt{2h} \chi,\nabla U(y)-\nabla U(x) \rangle
    =\dfrac{1}{2}\langle \sqrt{2h} \chi,D^2 U(x)\cdot (y-x)+O(D^3 U(x):(y-x)^2) \rangle,\\
    &-h \langle \nabla U(y)-\nabla U(x), \nabla U(x) \rangle
    =-h \langle D^2 U(x)\cdot (y-x)+O(D^3 U(x):(y-x)^2), \nabla U(x) \rangle\\
    &-\dfrac{h}{4}\norm{\nabla U(x)-\nabla U(y)}^2=hO(\norms{x-y}^2).
  \end{align*}

  Collecting these terms, and using Proposition \ref{prop:existence_y} and Assumption
  \ref{as:control_U}, it comes for $h\leq 1$
  \begin{align*}
      r(x_\xi,y_\xi)=-\dfrac{h}{2}\langle \nabla U(x),D^2 U(x)\cdot(y-x) \rangle
      +O(h^3 \norm{\nabla U(x)}^2)
      +O(\norm{\chi}^2+\norm{\chi}^3),
  \end{align*}
  where the big $O$ are independent of $h$.
  and eventually,
  \begin{align*}
    r(x_\xi,y_\xi)
    &=
     \dfrac{h^2}{2}\left(
     \langle \nabla U(x),D^2 U(x)\cdot \nabla U(x)\rangle
     +\xi \langle \nabla U(x),D^2 U(x)\cdot J \nabla U(x)\rangle
     \right)\\
    &+O(h^3 \norm{\nabla U(x)}^2)
     +O(\norm{\chi}^2+\norm{\chi}^3),
  \end{align*}
  Then, using the definite positivity of $D^2 U(x)$ and the fact that
  $\norm{\nabla U(x)}$ is coercive, for all $h$ small enough and for all
  $\epsilon>0$,
  there exists $C(h,\epsilon)>0$ such that
  for all $x\in\R^d$ such that $\norm{x}\geq C(h,\epsilon)$,
  \begin{align*}
    \xi \langle \nabla U(x),D^2 U(x)\cdot J \nabla U(x)\rangle\geq 0
    \Longrightarrow
    \prob{r(x_\xi,y_\xi)\geq 0}\geq 1-\epsilon.
  \end{align*}
\end{proof}

Proposition \ref{prop:drift2} can now be obtained as a consequence of Lemmas \ref{lem:alpha_pm}
and \ref{lem:contraction_V}.
\begin{proof}[Proof of proposition \ref{prop:drift2}]
  Because of the acceptance-rejection step, we get for all $x_\xi\in E$,
  \begin{align*}
    P^2V(x)=
    &\int_{\R^d} q(x_\xi,y_\xi)\alpha^2_{h,\xi}(x_\xi,y_\xi)\int_{\R^d} q(y_\xi,z_\xi)\alpha^2_{h,\xi}(y_\xi,z_\xi)V(z_\xi) dz dy\\
    &+\int_{\R^d} q(x_\xi,y_\xi)\alpha^2_{h,\xi}(x_\xi,y_\xi) V(y_{-\xi})
    \left(1-\int_{\R^d} q(y_\xi,z_\xi)\alpha^2_{h,\xi}(y_\xi,z_\xi) dz\right) dy\\
    &+\left(1-\int_{\R^d} q(x_\xi,y_\xi)\alpha^2_{h,\xi}(x_\xi,y_\xi) dy \right)
    \left( \int_{\R^d} q(x_{-\xi},y_{-\xi})\alpha^2_{h,\xi}(x_{-\xi},y_{-\xi}) V(y_{-\xi}) dy\right)\\
    &+V(x_{-\xi})\left(1-\int_{\R^d} q(x_\xi,y_\xi)\alpha^2_{h,\xi}(x_\xi,y_\xi) dy \right)
  \left(1- \int_{\R^d} q(x_{-\xi},y_{-\xi})\alpha^2_{h,\xi}(x_{-\xi},y_{-\xi}) dy \right).
  \end{align*}
  Simply using $\alpha^2_{h,\xi}\leq 1$ leads to,
  \begin{align*}
    P^2V(x)\leq
    &\int_{\R^d} q(x_\xi,y_\xi)\int_{\R^d} q(y_\xi,z_\xi)V(z_\xi) dz dy\\
    &+\int_{\R^d} q(x_\xi,y_\xi) V(y_{-\xi})
    \left(1-\int_{\R^d} q(y_\xi,z_\xi)\alpha^2_{h,\xi}(y_\xi,z_\xi) dz\right) dy\\
    &+\left(1-\int_{\R^d} q(x_\xi,y_\xi)\alpha^2_{h,\xi}(x_\xi,y_\xi) dy \right)
    \left( \int_{\R^d} q(x_{-\xi},y_{-\xi})V(y_{-\xi}) dy\right)\\
    &+V(x_{-\xi})\left(1-\int_{\R^d} q(x_\xi,y_\xi)\alpha^2_{h,\xi}(x_\xi,y_\xi) dy \right)
    \left(1- \int_{\R^d} q(x_{-\xi},y_{-\xi})\alpha^2_{h,\xi}(x_{-\xi},y_{-\xi}) dy \right).
  \end{align*}
  And eventually Lemmas \ref{lem:alpha_pm} and \ref{lem:contraction_V} gives,
  \begin{align*}
    \dfrac{P^2V(x)}{V(x)}\leq\quad
    &e^{-s\frac{h}{2}\norm{\nabla U(x)}^2+2sC_2}
    +e^{-s\frac{h}{2}\norm{\nabla U(x)}^2+sC_2}\\
    &+(1-\esp{\alpha^2_{h,\xi}(x_\xi,Y_{h,\xi})})e^{-s\frac{h}{2}\norm{\nabla U(x)}^2+sC_2}\\
    &+(1-\esp{\alpha^2_{h,\xi}(x_\xi,Y_{h,\xi})})(1-\esp{\alpha^2_{h,\xi}(x_{-\xi},Y_{h,-\xi})}).
  \end{align*}
  And thus,
  \begin{align*}
    \lim_{\norm{x}\to + \infty} \dfrac{P^2V(x)}{V(x)}=0
  \end{align*}
\end{proof}

\section{Generalized hybrid MALA}
\label{sec:GHMALA}

We propose in this part a second algorithm.
It is based on a splitting method by solving by turns the reversible and
the non reversible parts of equation \eqref{eq:ovLangevin_nrev_xi}
with measure preserving schemes.
The idea is then to integrate the pure reversible equation
\begin{align}
  \label{eq:rev}
  dx=-\nabla U(x)dt+dW_t.
\end{align}
by using MALA,
and to integrate the pure Hamiltonian equation
\begin{align}
  \label{eq:nrev}
  dx=-\xi J\nabla U(x)dt.
\end{align}
by an hybrid Monte-Carlo method based on a suitable Hamiltonian integrator.
This method presents some theoretical advantages on Generalized MALA.
Before presenting them, we begin with explaining the algorithm.
\vspace{2mm}

We denote by $\Psi^\xi_{t}$ the flow of the Hamiltonian equation \eqref{eq:nrev}
on the time interval $[0,t]$, and by $\Phi^\xi_{ h}$ a numerical
integrator for \eqref{eq:nrev} on a time step $ h$.
We precise later necessary conditions on this integrator that ensure unbiasedness
of the algorithm.

\begin{algo}[\textbf{Generalized Hybrid MALA}]
  Let $x_0$ be an initial point.
  Set $\xi=\pm 1$.
  Let $ h>0$.
  Iterate on $n\geq 0$,
  \begin{enumerate}
    \item Integration of the reversible part \eqref{eq:rev}:\\
    MALA is used to sample $x_{n+1/2}$ from $x_n$, with time-step $ h$.
    \item Integration of the non reversible part \eqref{eq:nrev}:
    \begin{enumerate}
      \item Compute $\widetilde{x}_{n+1}=\Phi^\xi_{ h}(x_{n+1/2})$.
      \item Set $x_{n+1}=\widetilde{x}_{n+1}$ with probability
      \begin{align*}
        \beta_{ h,\xi}(x_{n+1/2}) = \min\left(1,\exp(U(x_{n+1/2})-U(\widetilde{x}_{n+1}))\right)
        \min\left(1,\exp(U(x_{n+1/2})-U(\Phi^\xi_{h}(x_{n+1/2})))\right).
      \end{align*}
      Otherwise set $x_{n+1}=x_{n+1/2}$ and $\xi\leftarrow-\xi$.
    \end{enumerate}
  \end{enumerate}
\end{algo}
Similarly to the Hybrid Monte-Carlo algorithm, the first step enables to
explore the state space across the iso-potential lines, whereas the second step
enables to explore it along the iso-potential lines.
\vspace{2mm}

To ensure unbiasedness of the second step, the integrator $\Phi^\xi_{ h}$
must satisfy the following properties,
\begin{align}
  \label{eq:cond_Phi1}
  \Phi^\xi_{ h}=(\Phi^{-\xi}_{ h})^{-1},\\
  \label{eq:cond_Phi2}
  \det{\jac{\Phi^\xi_{ h}}}=1.
\end{align}
These properties are classical for hybrid Monte-Carlo methods (see Chapter 2.
\cite{LelievreRoussetStoltz10}).
\begin{lem}
  \label{lem:unbiasedness_hybrid}
  Under conditions \eqref{eq:cond_Phi1} and \eqref{eq:cond_Phi2}, the second step
  leaves the measure $\pi$ invariant.
\end{lem}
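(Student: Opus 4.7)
The plan is to view Step 2 as a Metropolis--Hastings update on the lifted space $E=\R^d\times\{-1,1\}$ with target measure $\bar\pi(dx,d\xi)=\tfrac{1}{2}\pi(x)\,dx\otimes(\delta_{-1}+\delta_{1})(d\xi)$, and to check that $\int (Tf)\,d\bar\pi=\int f\,d\bar\pi$ for every bounded measurable $f$ on $E$, where $T$ denotes the kernel of Step 2:
\[ (Tf)(x,\xi)=\beta_{h,\xi}(x)\,f(\Phi^{\xi}_{h}(x),\xi)+(1-\beta_{h,\xi}(x))\,f(x,-\xi). \]

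First I would split the integral against $\bar\pi$ into its acceptance and rejection contributions. In the acceptance integral I would change variables via $y=\Phi^{\xi}_{h}(x)$: condition \eqref{eq:cond_Phi2} gives $dy=dx$, and condition \eqref{eq:cond_Phi1} provides the explicit inverse $x=\Phi^{-\xi}_{h}(y)$. In the rejection integral, the relabeling $\xi\mapsto -\xi$ in the sum over $\{-1,1\}$ aligns the second argument of $f$ with that of the acceptance term. Collecting both contributions, the invariance reduces to proving, for each $\xi\in\{-1,1\}$ and almost every $y\in\R^d$, the identity
\[ \pi(\Phi^{-\xi}_{h}(y))\,\beta_{h,\xi}(\Phi^{-\xi}_{h}(y)) = \pi(y)\,\beta_{h,-\xi}(y), \]
or equivalently, setting $x=\Phi^{-\xi}_{h}(y)$ (so that $y=\Phi^{\xi}_{h}(x)$), the skew detailed-balance relation
\[ \pi(x)\,\beta_{h,\xi}(x) = \pi(\Phi^{\xi}_{h}(x))\,\beta_{h,-\xi}(\Phi^{\xi}_{h}(x)). \]

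To verify this last identity, I would set $\tilde x=\Phi^{\xi}_{h}(x)$ and use \eqref{eq:cond_Phi1} to note that $\Phi^{-\xi}_{h}(\tilde x)=x$. Each $\min$-factor appearing in $\beta_{h,-\xi}(\tilde x)$ is then of the form $\min(1,e^{U(\tilde x)-U(x)})=\min(1,\pi(x)/\pi(\tilde x))$, while the corresponding factor in $\beta_{h,\xi}(x)$ is $\min(1,\pi(\tilde x)/\pi(x))$. The classical Metropolis symmetry $\pi(x)\min(1,\pi(\tilde x)/\pi(x))=\min(\pi(x),\pi(\tilde x))=\pi(\tilde x)\min(1,\pi(x)/\pi(\tilde x))$ applied factor by factor then gives the required equality.

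The main obstacle is the bookkeeping on the lifted space: what makes the argument go through is precisely that a rejection flips $\xi$, so the flux of accepted moves from $(x,\xi)$ is paired against the rejection flux at $(\Phi^{\xi}_{h}(x),-\xi)$, and the involution \eqref{eq:cond_Phi1} is exactly what is needed for this pairing to be consistent. Once the change of variables has been performed, the remaining algebraic check via the Metropolis symmetry is routine.
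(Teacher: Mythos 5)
Your overall strategy is exactly the paper's: the printed proof simply invokes the generalized Metropolis--Hastings framework of \cite{LelievreRoussetStoltz10} with the deterministic proposal $Q(x_\xi,dy_\eta)=\delta_{\Phi^\xi_{h}(x)}(dy)\,\delta_\xi(\eta)$ and the involution $S(x_\xi)=x_{-\xi}$, and your computation --- splitting into acceptance and rejection fluxes, changing variables with \eqref{eq:cond_Phi1}--\eqref{eq:cond_Phi2}, relabelling $\xi\mapsto-\xi$ in the rejection term, and reducing to the pointwise identity $\pi(x)\,\beta_{h,\xi}(x)=\pi(\Phi^{\xi}_{h}(x))\,\beta_{h,-\xi}(\Phi^{\xi}_{h}(x))$ --- is precisely that argument written out. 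Up to and including that reduction, everything you do is correct.

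The last step, however, does not close. Write $\tilde x=\Phi^{\xi}_{h}(x)$, $a=\min(1,\pi(\tilde x)/\pi(x))$ and $b=\min(1,\pi(x)/\pi(\tilde x))$. As printed in the algorithm, $\beta_{h,\xi}(x)$ is a product of \emph{two} min-factors which are literally identical (since $\widetilde{x}_{n+1}=\Phi^{\xi}_{h}(x_{n+1/2})$), so $\beta_{h,\xi}(x)=a^2$ and, by \eqref{eq:cond_Phi1}, $\beta_{h,-\xi}(\tilde x)=b^2$. The Metropolis symmetry $\pi(x)a=\pi(\tilde x)b$ can absorb the single prefactor $\pi(x)$ into only one of the two factors; for the second you would need $a=b$, which fails whenever $\pi(x)\neq\pi(\tilde x)$. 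Concretely, if $\pi(\tilde x)<\pi(x)$ then $\pi(x)a^2=\pi(\tilde x)^2/\pi(x)$ while $\pi(\tilde x)b^2=\pi(\tilde x)$, so ``applied factor by factor'' is not a valid step and the skew detailed-balance identity is false for the two-factor $\beta$. The identity, and hence the lemma, holds exactly when $\beta_{h,\xi}(x)$ is the single Metropolis factor $\min\left(1,e^{U(x)-U(\Phi^{\xi}_{h}(x))}\right)$ --- which is what the paper's own proof implicitly uses when it asserts that the generalized Metropolis--Hastings ratio equals $\exp(\log\pi(y)-\log\pi(x))$. You should either observe that the duplicated factor in the definition of $\beta_{h,\xi}$ is a typo and complete the proof for the single-factor acceptance probability (where your argument goes through verbatim), or note that the statement fails for the acceptance probability as literally printed.
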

\begin{proof}[Proof of Lemma \ref{lem:unbiasedness_hybrid}]
  The proof can be found in \cite{LelievreRoussetStoltz10}.
  It consists in seeing this step as a generalized Metropolis-Hastings step,
  with proposal $Q(x_\xi,y_{\eta})=\delta_{\Phi^\xi_{ h}(x)} \delta_{\xi}(\eta)$,
  and symmetric operator $S(x_\xi)=x_{-\xi}$.
  Then, it is enough to show that the Metropolis-Hastings ratio $r$ defined by
  the following Radon-Nikodym derivative
  \begin{align*}
    r(x_\xi,y_{\eta})=\dfrac{Q(S(y_\eta),S(dx_\xi)) \pi(dy_\eta)}
    {Q(x_\xi,dy_\eta)\pi(dx_\xi)},
  \end{align*}
  is equal to $\exp\left(\log \pi(y) -\log \pi(x) \right)$.
\end{proof}

For example, for all $x_\xi\in E$ the centered point integrator
$\Phi^\xi_{ h}(x)$ defined by the solution $y$ of the following equation
\begin{align}
  \label{eq:integrator_hybrid}
  y=x- h \xi J \nabla \log \pi\left(\dfrac{x+y}{2} \right),
\end{align}
satisfies these properties, under some assumptions that ensure well-posedness.

\begin{lem}
  \label{lem:well-posedness_hybrid}
  Under Assumption \ref{as:U_Lipschitz},
  there exists $ h_0>0$ such that for all positive $ h< h_0$,
  there exists a unique solution to equation \ref{eq:integrator_hybrid}, and
  the integrator $\Phi^\xi_{ h}$ is well-defined and satisfies equations
  \eqref{eq:cond_Phi1} and \eqref{eq:cond_Phi2}.
\end{lem}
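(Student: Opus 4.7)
The plan is to verify in turn the three claims (well-posedness, the involution property \eqref{eq:cond_Phi1}, and the volume-preservation property \eqref{eq:cond_Phi2}), reusing arguments already developed in the paper.

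First, for well-posedness I would reproduce the Picard fixed point argument of Lemma \ref{lem:diffeo}. Fix $x\in\R^d$ and $\xi\in\{-1,1\}$, and (using $\nabla\log\pi=-\nabla U$) consider the map $\Psi\colon y \mapsto x + h\xi J \nabla U\!\left(\tfrac{x+y}{2}\right)$ on $\R^d$. Under Assumption \ref{as:U_Lipschitz}, $\nabla U$ is globally Lipschitz with some constant $L$, so $\Psi$ is a contraction as soon as $\tfrac{hL\norm{J}}{2}<1$; this fixes an admissible $h_0$ and yields a unique fixed point $y=\Phi^\xi_h(x)$. Smoothness of $\Phi^\xi_h$ follows from the implicit function theorem applied to $F(x,y)=y-x-h\xi J\nabla U\!\left(\tfrac{x+y}{2}\right)$, whose partial derivative in $y$ is $I-\tfrac{h\xi}{2}JD^2U$, invertible for $h<h_0$.

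Next, to establish \eqref{eq:cond_Phi1} I would exploit the symmetry of the defining equation under the simultaneous swap $(x,y,\xi)\leftrightarrow(y,x,-\xi)$. If $y=\Phi^\xi_h(x)$, rearranging the defining equation gives $x=y+h(-\xi)J\nabla U\!\left(\tfrac{y+x}{2}\right)$, which by the uniqueness just established means $x=\Phi^{-\xi}_h(y)$; hence $\Phi^{-\xi}_h\circ\Phi^\xi_h=\mathrm{Id}$.

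Finally, for \eqref{eq:cond_Phi2} I would differentiate the implicit equation to obtain
\begin{equation*}
\left(I-\tfrac{h\xi}{2}JH\right)dy=\left(I+\tfrac{h\xi}{2}JH\right)dx,
\end{equation*}
with $H=D^2U\!\left(\tfrac{x+y}{2}\right)$ symmetric and $J$ skew-symmetric. Setting $A=\tfrac{h\xi}{2}JH$, this yields $\jac\Phi^\xi_h(x)=(I-A)^{-1}(I+A)$, so $\det\jac\Phi^\xi_h=\det(I+A)/\det(I-A)$. The equality $\det(I+A)=\det(I-A)$ is exactly the content of the characteristic polynomial identity $\chi_{JH}=\chi_{-JH}$ used in Lemma \ref{lem:jac_pm}, so the ratio equals $1$. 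The only real subtlety is this last step: once the Jacobian is brought to the form $(I-A)^{-1}(I+A)$, volume preservation is immediate from the argument of Lemma \ref{lem:jac_pm}.
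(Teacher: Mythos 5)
Your proposal is correct and follows essentially the same route as the paper: well-posedness via the Picard fixed point argument of Lemma \ref{lem:diffeo}, the involution \eqref{eq:cond_Phi1} from the symmetry of the midpoint equation under $(x,y,\xi)\leftrightarrow(y,x,-\xi)$ together with uniqueness, and volume preservation \eqref{eq:cond_Phi2} from the determinant identity $\det(I+A)=\det(I-A)$ for $A=\tfrac{h\xi}{2}JH$ with $J$ skew-symmetric and $H$ symmetric, which is exactly Lemma \ref{lem:jac_pm}. The paper's own proof is a one-line deferral to Lemma \ref{lem:diffeo}; you have merely written out the details it leaves implicit, and all three steps check out.
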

\begin{proof}[Proof of Lemma \ref{lem:well-posedness_hybrid}]
  The proof follows on from Lemma \ref{lem:diffeo}.
\end{proof}
\vspace{2mm}

The main benefit of this algorithm with respect to GMALA, is the better
average acceptance ratio of the Hybrid step with the centered point integrator,
which is of order $O( h^3)$ instead of $O( h^{3/2})$ for GMALA,
which may enable to reduce the rate of switching directions.
\begin{lem}
  \label{lem:hybrid_alpha}
  Suppose Assumption \ref{as:control_U}.
  Then, for all $l\geq 1$, there exists $ h_0>0$ such that for all positive
  $ h< h_0$, and for all $x\in\R^d$,
  \begin{align*}
    (1-\beta_{ h,\xi}(x))^{2l}\leq C\norm{\nabla U(x)}^{2l}h^{6l}.
  \end{align*}
\end{lem}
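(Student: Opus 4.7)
The plan is to mimic the rejection-ratio computation of Proposition \ref{prop:reject_ratio_fp}, but this time exploiting the order-$h^3$ energy-preservation property of the symplectic implicit midpoint integrator $\Phi^\xi_h$. The whole difficulty is concentrated in estimating $|U(\Phi^\xi_h(x)) - U(x)|$, because an elementary manipulation reduces the lemma to such an estimate.

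First I would use $1 - ab \leq (1-a) + (1-b)$ for $a,b\in[0,1]$ together with $1 - \min(1, e^\Delta) \leq |\Delta|$. Since each factor of $\beta_{h,\xi}(x)$ is of the form $\min(1,\exp(U(x)-U(\Phi^\xi_h(x))))$, this yields
\[
1 - \beta_{h,\xi}(x) \leq 2\,|U(\Phi^\xi_h(x)) - U(x)|.
\]

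Next I would Taylor-expand the potential increment. Setting $y = \Phi^\xi_h(x)$ and $m = (x+y)/2$, the defining relation $y - x = -h\xi J\nabla U(m)$ gives $\|y-x\| \leq Ch\|\nabla U(m)\|$ and allows one to write
\[
U(y) - U(x) = -h\xi \int_0^1 \nabla U\!\left(m + (s - \tfrac{1}{2})(y-x)\right)^{\!\top} J\nabla U(m)\,ds.
\]
The key observation is that expanding $\nabla U$ around $m$ produces two structural cancellations: the zeroth-order contribution is $\nabla U(m)^{\top} J\nabla U(m) = 0$ by skew-symmetry of $J$, and the term linear in $(s - 1/2)$ integrates to zero over $[0,1]$. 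The first surviving term is quadratic in $(s-1/2)$ and produces an expression of the form $D^3 U(m)\cdot (J\nabla U(m))^{\otimes 3}$, of size $O\!\left(h^3\|D^3 U(m)\|\|\nabla U(m)\|^3\right)$.

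Invoking Assumption \ref{as:control_U} I would then conclude. The uniform lower and upper bounds on the eigenvalues of $D^2 U$ imply $\|\nabla U(m)\|\leq (1+Ch)\|\nabla U(x)\|$, and the uniform bound on $\|D^3 U\|\|\nabla U\|$ outside a ball (together with continuity on compacts) collapses the leading term to $C h^3 \|\nabla U(x)\|^{2}$, which is absorbed into the stated estimate. Raising to the $2l$-th power then yields the claim. The main obstacle is controlling the Taylor remainders of order $h^5$ and higher, because Assumption \ref{as:control_U} provides no uniform bound on $D^k U$ for $k\geq 4$. I would avoid invoking such bounds by stopping the expansion of $\nabla U$ around $m$ at first order with an exact integral remainder and estimating this remainder by the global Lipschitz property of $\nabla U$ (which follows from the uniform bound on $D^2 U$), in the same spirit as the computations already carried out in Proposition \ref{prop:reject_ratio_fp}.
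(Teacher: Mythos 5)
Your proposal is correct and takes essentially the same route as the paper, whose entire proof is the single line ``set $y=\Phi^\xi_h(x)$; a Taylor expansion yields $\abs{U(y)-U(x)}=O(h^3\norm{D^3U(x)}\norm{\nabla U(x)}^3)$'' --- your argument is the fleshed-out version of exactly that expansion, correctly identifying the two cancellations (skew-symmetry of $J$ at order zero, the midpoint symmetry $\int_0^1(s-\tfrac12)\,ds=0$ at order one) that produce the $h^3$ rate, and correctly noting that the remainder must be kept in integral form so that only $D^2U$ and $D^3U$ are ever invoked. The one caveat, which your proof shares with the paper's own, is that Assumption \ref{as:control_U} turns $\norm{D^3U}\norm{\nabla U}^3$ into $C\norm{\nabla U(x)}^2$, so raising to the power $2l$ gives the exponent $4l$ rather than the stated $2l$ on $\norm{\nabla U(x)}$; your remark that this is ``absorbed into the stated estimate'' inherits that same discrepancy rather than resolving it.
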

\begin{proof}[Proof of Lemma \ref{lem:hybrid_alpha}]
  For $x\in\R^d$, we set $y=\Phi^\xi_{ h}(x)$.
  A Taylor expansion of $U(y)$ yields,
  \begin{align*}
    \abs{U(y)-U(x)} = O(h^3\norm{D^3 U(x)}\norm{\nabla U(x)}^3).
  \end{align*}
\end{proof}

The centered point integrator is an example of integrator for the Hamiltonian
dynamics, that can be used as soon as the potential $U$ is globally Lipschitz.
Actually, similarly to GMALA in the non globally Lispchitz case,
one can construct an approximate integrator $\Phi_h^\xi$ by using a truncation
of $\nabla U$ to ensure its global Lipschitzness.
Yet, this trick may lead to high rejection rates in the areas where $\nabla U$
is truncated.
Again, other strategies can be used like importance sampling or even a change of
variable in the integrand \eqref{eq:integral}.
The GHMALA algorithm is especially interesting when we are able to integrate
efficiently the Hamiltonian dynamics.
We show later on the numerical experimentations two examples of specific integrators
that enable to improve significantly the performance of GHMALA with respect to
GMALA with proposal kernel $Q_2^\xi$.
We can recall for example, the case of separated Hamiltonian dynamics that can
be integrated with explicit volume preserving schemes which are time-reversible
and symmetric (\cite{qin1993}).
\vspace{2mm}

The authors propose in \cite{Duncan2016} a similar splitted scheme
where the hybrid step is replaced by fourth-order Runge-Kutta method.
Even though this choice leads to a biased estimator, it enabled to get rid of the centered
point integrator, that may be more costly than the Runge-Kutta method
in terms of computation time.
\vspace{2mm}

\section{Numerical experimentations}
\label{sec:num}

\subsection{Anisotropic distribution}

In this section, we test our nonreversible MALA algorithm by computing
an observable $f$ with respect to a two dimensional anisotropic distribution.
More precisely we want to estimate $\esp{f(X)}$ with
$f((x_1,x_2))=x_1^2 1_{x_1>15}$, and $X\sim \pi(x)$ with
$\pi(x)\propto e^{-V(x)}$, where
\begin{align*}
  V((x_1,x_2))=\dfrac{x_1^2}{\sqrt{1+50 x_1^2}}+x_2^2.
\end{align*}
Such a distribution is more stretched out in the $x_1$ direction rather than
the $x_2$.
We expect our lifted algorithm to be more favorable than MALA when the
anisotropy is strong.
Indeed, MALA tends to perform a slower exploration of the state space in the
$x_1$ direction with respect to $x_2$ the direction.
The lifted algorithm is supposed to correct this problem since the Hamiltonian
dynamics should lead to a fast exploration of the iso-potential lines,
which are stretched in the direction $x_1$.
We choose as the skew-symmetric matrix $J$ defined by,
\begin{align}
  \label{eq:J_numerical}
  J=\alpha\begin{pmatrix}0 & 1\\-1 & 0\end{pmatrix},
\end{align}
with $\alpha$ a real parameter.

To compare MALA with GMALA and GHMALA, different optimal time steps parameters
should be used.
There is a tradeoff between achieving high average acceptance ratios (obtained with
small time steps) and small correlation between the successive samples
(obtained with large time-steps).
More precisely MALA usually gives its best results with a large $ h$ that
ensures a significant average rejection ratio.
On the contrary, GMALA and GHMALA require much smaller time steps to avoid the
regress that happens with the direction switching at each rejection.
Thus, MALA should be used with a higher time step than the two others.
Moreover, as GMALA with proposal kernel $Q_1^\xi$ leads to a worse average
acceptance ratio than $Q_2^\xi$, the former requires a smaller time-step than
the latter.
\begin{figure}[!ht]
    \centering
    \includegraphics[width=0.70\textwidth]{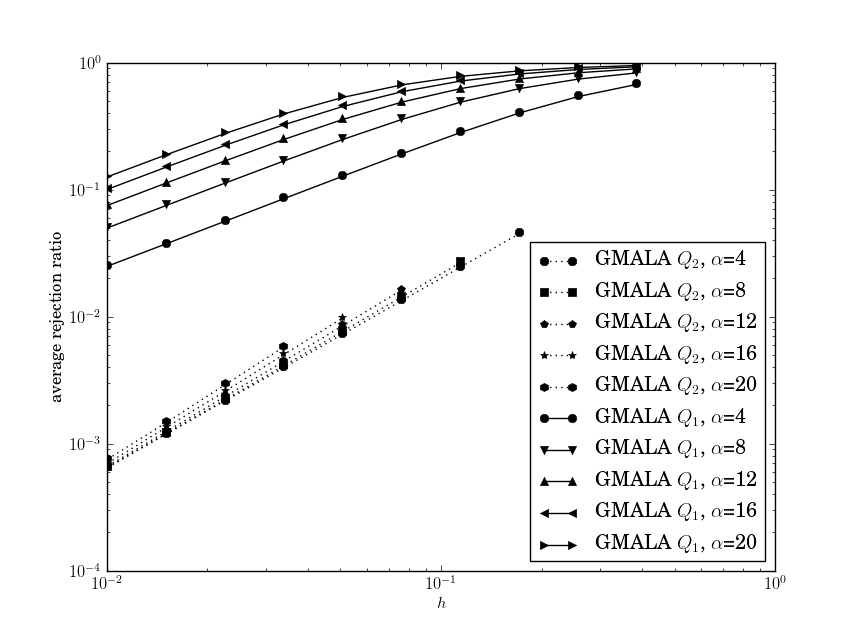}
    \caption{Comparison of average rejection ratio for GMALA with proposal kernels
    $Q_1^\xi$ and $Q_2^\xi$.}
    \label{fig:ratio_comp_Q1_Q2}
\end{figure}
Figure \ref{fig:ratio_comp_Q1_Q2} shows the average acceptance ratio
with respect to the time step $ h$ for GMALA with proposal kernels
$Q_1^\xi$ and $Q_2^\xi$.
We can verify that the average rejection ratio of GMALA with $Q_1^\xi$ is indeed
much worse than MALA and is of order $ h$.
In practice, this limitation leads to very correlated successive samples,
and thus bad asymptotic variances for the estimators based on such Markov chain.
In the following, we always consider GMALA with proposal kernel $Q_2^\xi$
instead of $Q_1^\xi$.
\vspace{2mm}

\begin{figure}[!ht]
    \centering
    \includegraphics[width=0.70\textwidth]{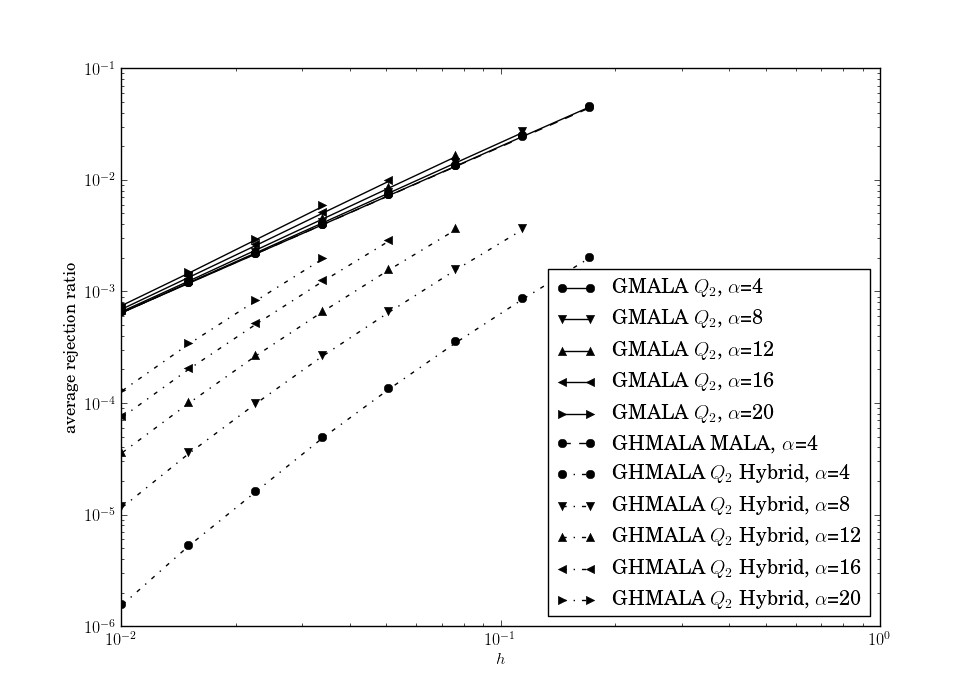}
    \caption{Comparison of average rejection ratio for MALA, GMALA (with proposal kernel $Q_2^\xi$) and GHMALA.}
    \label{fig:ratio_comp_GMALA_GHMALA}
\end{figure}
Figure \ref{fig:ratio_comp_GMALA_GHMALA} shows the
acceptance ratio with respect to the time step $ h$
for those three algorithms.
GHMALA is composed by two steps: the first one consists of MALA and the second one
consists of a Hybrid iteration.
Both steps are composed by an acceptanceection step.
The average acceptance rate of the first step does not depend on $\alpha$,
and is denoted by \textit{GHMALA MALA} in the legend.
Moreover, it is the same as a plain MALA.
The average acceptance rate of the second step is denoted by \textit{GHMALA Hybrid}.
We also observe that the rejection ratio for GMALA with $Q_2^\xi$ is close to MALA's and scales
as $ h^{3/2}$
This is due to the fact that the non reversibility contributes at order $ h^2$
in the expression of the average rejection ratio.
We can also verify the upper bound on the rejection probability of
the hybrid step, given by Lemma \ref{lem:hybrid_alpha}.
\vspace{2mm}

\begin{figure}[!ht]
    \centering
    \includegraphics[width=0.70\textwidth]{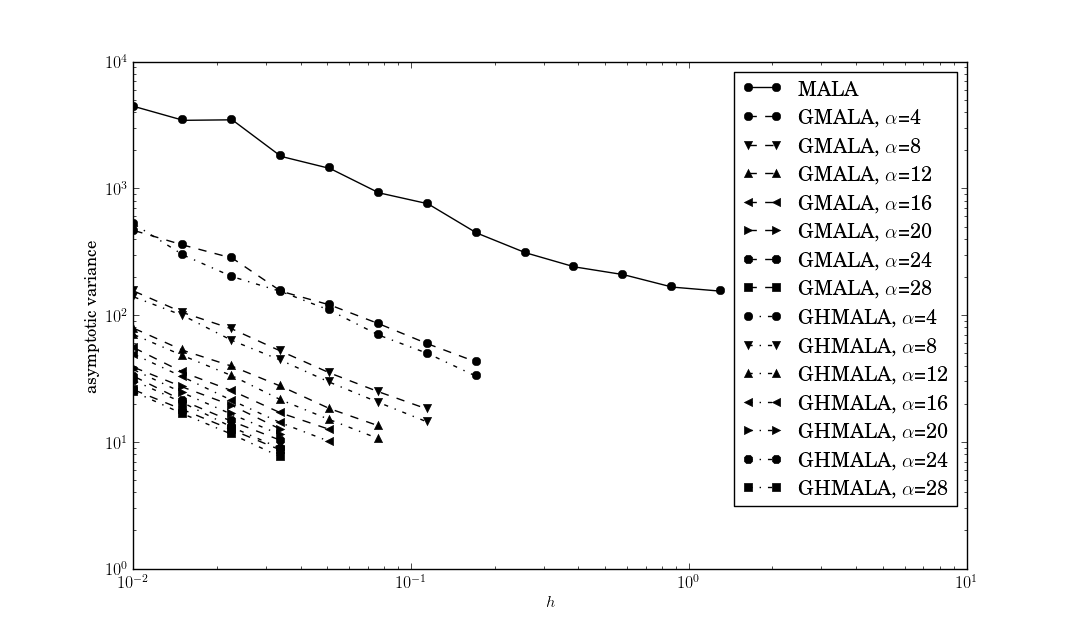}
    \caption{Variance comparison of MALA, GMALA ($Q_2^\xi$) and GHMALA on the anisotropic
    distribution}
    \label{fig:var_comp}
\end{figure}
We compare now the asymptotic variance of the estimators build with MALA, GMALA
and GHMALA.
To do so, we compute 1000 independent estimators composed by the time average of
$10^5$ samples.
We plot in Figure \ref{fig:var_comp} the empirical relative variance of these estimators,
We observe that GMALA performs much better than MALA by a factor 20.
We should precise that a reduction in the variance does not necessarily mean
a reduction in computing time since one iteration of GMALA is more costly
than MALA since it requires a fixed point iteration.


The question of choosing $\alpha$ is quite natural.
In the case of the time-continuous process, it is known that the decrease in
asymptotic variance is monotonic with the intensity $\alpha$, which suggests to
use the algorithms with large $\alpha$ \cite{ReyBellet15,Duncan2016,Hwang20153522}.
Yet, well-posedness of the proposal kernel $Q_2^\xi$ requires the product
$\alpha  h$ to be small enough to ensure convergence of the fixed point.
From a computational point of view, the cost of the method is proportional to
the number of Picard iterations, which scales like $-\log \rho$,
where $\rho$ denotes the contraction ratio (that scales as $\alpha  h$).
Then, the choice of parameters $\alpha$ and $ h$ should take into account
these two effects.

\subsection{Warped Gaussian distribution}

This example deals with a non quadratic potential.
Both GMALA and GHMALA can be adapted to this case.
The simple idea is to build a proposal kernel with a truncation of $\nabla U$,
to make it globally Lipschitz.
This is the same idea as the one used for MALTA (\cite{roberts1996}).
Moreover, it is also possible to choose a more efficient integrator
than the centered point integrator defined by Equation \eqref{eq:integrator_hybrid}.
\vspace{2mm}

To illustrate these two methods, we test now our algorithms in the case of a
two-dimensional warped Gaussian distribution.
This toy case has been introduced in \cite{haario2001} and used as a benchmark
for variance reduction methods based on nonreversible Langevin samplers in
\cite{Duncan2016}.
More precisely, we aim to estimate $\esp{f(X)}$ with the observable $f$
and $X$ distributed with $\pi$, defined by,
\begin{align*}
  f(x)=x_1^2+x_2^2,\quad
  \pi(x)\propto e^{-V(x)},\quad\text{with}\quad
  V(x)=\dfrac{x_1^2}{100}+\left(x_2+\dfrac{x_1^2}{20}-5 \right)^2.
\end{align*}
We define the skew symmetric matrix $J$ by \eqref{eq:J_numerical}.
It appeared in \cite{Duncan2016} that the nonreversible Langevin dynamics
enables to reduce the asymptotic variance by several orders of magnitude.
\vspace{2mm}

We propose to implement GMALA using a truncated drift to make it globally Lipschitz
to ensure the well-posedness of the method,
and to implement GHMALA using a specific integrator of the Hamiltonian dynamics.
We show that GHMALA performs better than GMALA and MALA in this casee.
More precisely, the integrator is defined as the centered point integrator, after
the symplectic change of variable $\psi$ defined for all $(x_1,x_2)\in\R^2$ by
\begin{align*}
  \psi(x_1,x_2)=(x_1,x_2+\dfrac{x_1^2}{20}-5).
\end{align*}
Typically, this integrator enables to solve this dynamics for larger time steps
than GMALA with proposal kernel $Q_2^\xi$, and thus reduces the asymptotic variance.
Figure \ref{fig:var_warp} displays the asymptotic variance for the estimators build
with these algorithms.
It is computed as the empirical variance of 2000 independent estimators constructed
as the time average of $10^5$ iterations of the algorithms.
\begin{figure}[!ht]
    \centering
    \includegraphics[width=0.70\textwidth]{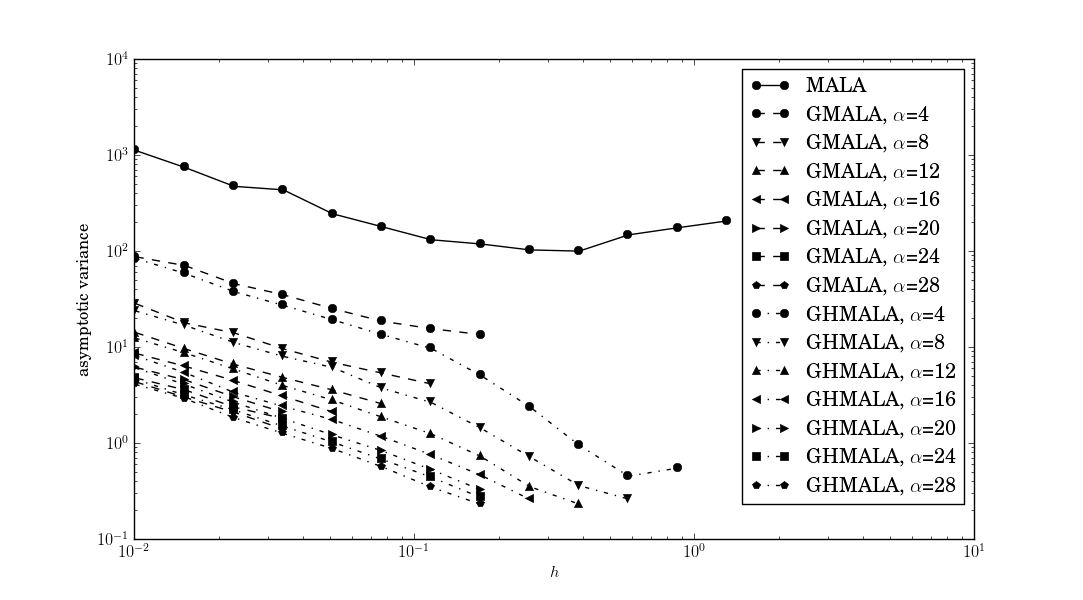}
    \caption{Variance comparison of MALA, GMALA and GHMALA on the warped Gaussian
    distribution}
    \label{fig:var_warp}
\end{figure}
We can observe that GMALA and GHMALA performs similarly for small time steps
$ h$.
Yet, for larger time-steps, it is not possible to define the proposal kernel
for GMALA, whereas it is still the case for GHMALA.
Eventually, we achieve a variance reduction of about a factor 500 with GHMALA and
60 with GMALA, compared with classical MALA.

\subsection{Quartic Gaussian distribution}

This toy case aims to present a particular case where GHMALA can be
used without implicit integrator, and in a non globally Lipschitz case,
which may enable to reduce the computational cost by avoiding the fixed point
iteration.
Again, we aim to estimate $\esp{f(X)}$ where $X$ is distributed with $\pi$,
and where,
\begin{align*}
  f(x)=x_1^2+x_2^2,\quad
  \pi(x)\propto e^{-V(x)},\quad\text{with}\quad
  V(x)=\dfrac{x_1^2}{100}+x_2^4.
\end{align*}
We define the skew-symmetric matrix $J$ by \eqref{eq:J_numerical}.
In this case, the Hamiltonian dynamics defines then a separable system
and volume-preserving explicit methods can be used (see \cite{qin1993}).
More precisely, we define $\Phi^\xi_{ h}$ for all $x=(x_1,x_2)\in\R^2$
by $\Phi^\xi_{ h}(x)=(y_1,y_2)$, where,
\begin{align*}
  y_1^{1/2}&=x_1-\dfrac{ h}{2} \alpha \xi \dfrac{\partial V}{\partial x_1}(x)\\
  y_2&=x_2+ h \alpha \xi \dfrac{\partial V}{\partial x_2}((y_1^{1/2},x_2))\\
  y_1&=y_1^{1/2}-\dfrac{ h}{2} \alpha \xi \dfrac{\partial V}{\partial x_1}((y_1^{1/2},y_2))
\end{align*}
Thus, the non Lipschitz nonlinearity of $\nabla V$ is not an obstacle to the
well-posedness of this integrator.
\vspace{2mm}

\begin{figure}[!ht]
    \centering
    \includegraphics[width=0.70\textwidth]{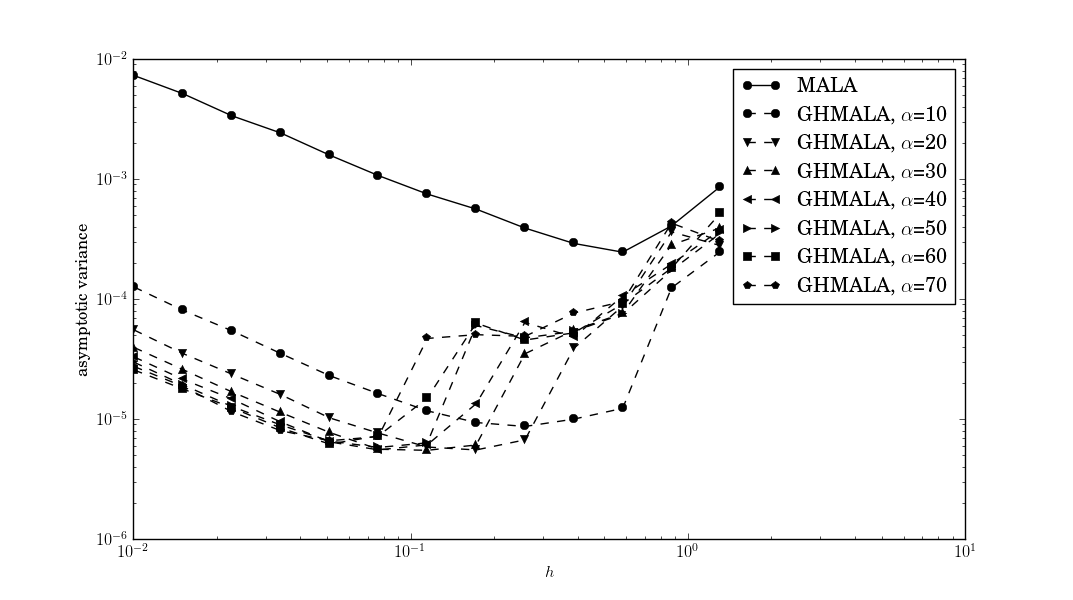}
    \caption{Variance comparison of MALA and GHMALA on the quartic Gaussian
    distribution}
    \label{fig:var_quartic}
\end{figure}
We plot in figure \ref{fig:var_quartic} the asymptotic variance for the time
average estimator build with GHMALA in the same way as for the warped Gaussian distribution.
We can observe that the decrease in variance between MALA and GHMALA for small
$ h$ is around 280.
Nevertheless, for larger $ h$, the explicit integration is not accurate
enough, which leads to an increase in the asymptotic variance for larger time steps.
Eventually, the smallest asymptotic variance of the time average estimator of GHMALA
is around 50 times lower than the smallest asymptotic variance for MALA.

\section*{Conclusion}
We presented a class of unbiased algorithm that enables to benefit from the
variance reduction of the nonreversible Langevin equations \eqref{eq:ovLangevin_nrev}
with respect to the reversible dynamics \eqref{eq:ovLangevin_rev}.
More precisely, we presented two variations of these algorithms.
The first one (GMALA) can be viewed as a lifting method, and more specifically as a
generalized Metropolis Hastings methods on a lifted state space.
The second one (GHMALA), similar to the first one, can be viewed as a
Generalized Hybrid Monte-Carlo method.

Numerical experimentations show that variance reductions (compared with classical
MALA) of several orders of magnitude can be achieved for potentials
concentrated on a lower dimensional submanifold.
We also expect these algorithms to perform better in the case of entropic barriers.
The main difficulty is, in the case of GMALA, to use a proposal that enables
to achieve a sufficiently high average acceptance ratio (to compete with MALA).
For example this can be done by using a mid-point discretization.
Even though this scheme is implicit, the computation of the Metropolis-Hastings
acceptance probability does not require the computation of the Hessian of $\log \pi$.
In the case of GHMALA, numerical experimentations show that the choice of a
suitable Hamiltonian integrator may lead to large improvements and computational
cost reduction compared with the mid-point method.

\vspace{2mm}
\paragraph{\textbf{Acknowledgement :}}
This work was supported by a public grant as part of the Investissement d'avenir project,
reference ANR-11-LABX-0056-LMH, LabEx LMH.

\bibliographystyle{amsplain}
\bibliography{bibliographie}

\end{document}